\definecolor{luh-dark-blue}{rgb}{0.0, 0.313, 0.608}
\newtheorem{theorem}{Theorem}[section]
\newtheorem{lemma}{Lemma}[section]
\newtheorem{corollary}{Corollary}[section]
\newtheorem{remark}{Remark}[section]
\newcommand{\eqn}{\begin{eqnarray}}
\newcommand{\een}{\end{eqnarray}}
\numberwithin{equation}{section}
\DeclareMathOperator{\dv}{div}
\newcommand{\R}{\mathbb R}
\newcommand{\bq}{\begin{equation}}
\newcommand{\eq}{\end{equation}}
\newcommand{\lt}{\left}
\newcommand{\rt}{\right}
\newcommand{\mx}{\mathcal X}
\newcommand{\my}{\mathcal Y}
\def\moverlay{\mathpalette\mov@rlay}
\def\mov@rlay#1#2{\leavevmode\vtop{%
   \baselineskip\z@skip \lineskiplimit-\maxdimen
   \ialign{\hfil$\m@th#1##$\hfil\cr#2\crcr}}}
\newcommand{\charfusion}[3][\mathord]{
    #1{\ifx#1\mathop\vphantom{#2}\fi
        \mathpalette\mov@rlay{#2\cr#3}
      }
    \ifx#1\mathop\expandafter\displaylimits\fi}
\newcommand{\intr}{\int_{\R^3}}
 \newcommand{\norm}{\@ifstar{\@normb}{\@normi}}
 \newcommand{\@normb}[2]{\left\Vert{#1}\right\Vert_{#2}}
 \newcommand{\@normi}[2]{\Vert{#1}\Vert_{#2}}
\newcommand\wh[1]{%
\savestack{\tmpbox}{\stretchto{%
  \scaleto{%
    \scalerel*[\widthof{\ensuremath{#1}}]{\kern-.6pt\bigwedge\kern-.6pt}%
    {\rule[-\textheight/2]{1ex}{\textheight}}
  }{\textheight}%
}{0.5ex}}%
\stackon[1pt]{#1}{\tmpbox}%
}
\begin{document}

\title[Navier-Stokes equations and Hall-MHD]{On the temporal estimates for the incompressible Navier-Stokes equations and the Hall-magnetohydrodynamic equations}

\author[Bae]{Hantaek Bae}
\address[Hantaek Bae]{\newline Department of Mathematical Sciences, \newline
Ulsan National Institute of Science and Technology (UNIST), Republic of Korea}
\email{hantaek@unist.ac.kr}

\author[Jung]{Jinwook Jung}
\address[Jinwook Jung]{\newline Department of Mathematics  and Research Institute for Natural Sciences \newline Hanyang University,  Republic of Korea}
\email{jinwookjung@hanyang.ac.kr}

\author[Shin]{Jaeyong Shin}
\address[Jaeyong Shin]{\newline Department of Mathematics \newline
Yonsei University, Republic of Korea}
\email{sinjaey@yonsei.ac.kr}

\date{\today}
\keywords{Navier-Stokes equations, Hall-MHD, Pseudo-measures, Lei-Lin spaces, Decay rates}
\subjclass[2010]{35Q85, 35D35.}

\begin{abstract}
In this paper, we derive decay rates for solutions to the incompressible Navier-Stokes equations and Hall-magnetohydrodynamic equations. We first improve the decay rate of weak solutions to these equations by refining the Fourier splitting method with initial data in the space of pseudo-measures. Additionally, we investigate these equations with initial data in the Lei-Lin spaces and establish decay rates for those solutions.
\end{abstract}

\maketitle

\vspace{-5ex}

\section{Introduction}
In this paper, we study the large-time behavior of two parabolic systems in $\mathbb{R}^{3}$: the incompressible Navier-Stokes equations and the Hall-magnetohydrodynamic (Hall-MHD) equations. We begin with the incompressible Navier-Stokes equations and provide several decay rate results under appropriate assumptions on the initial data. We then turn our attention to the Hall-MHD equations and perform a similar analysis.

Before proceeding, we establish some notation. We let $C_{0}$ denote a generic constant depending on the various norms of the initial data and the parameters in our results, but independent of time. Moreover, $f\in L^\infty([0,\infty);X)$ signifies that $\displaystyle \sup_{0\leq t<\infty}\|f(t)\|_{X}\leq C_{0}$, where $X$ is a Banach space.

\subsection{The incompressible Navier-Stokes equations}
The incompressible Navier-Stokes equations are given by 
\eqn \label{NSE}
\begin{split}
&u_{t}  +u\cdot \nabla u +\nabla p-\mu\Delta u=0,\\
& \dv u=0,
\end{split}
\een
where $u$ is the fluid velocity and $p$ is the pressure. $\mu>0$ is a viscosity coefficient for which we set $\mu=1$ for simplicity. We begin with a weak solution of (\ref{NSE}) in $\mathbb{R}^{3}$. The existence of a global-in-time weak solution with a divergence-free initial datum $u_{0}\in L^{2}$ is proved in \cite{Leray} where $u$ satisfies the energy inequality 
\eqn\label{NSE energy}
\norm{u(t)}{L^{2}}^{2}+2\int^{t}_{0}\norm{\nabla u(\tau)}{L^{2}}^{2}\,d\tau\leq \norm{u_{0}}{L^{2}}^{2}
\een
for all $t>0$. We also notice that the following question is raised in \cite{Leray}: is $\|u(t)\|_{L^{2}}$ convergent to 0 as $t\rightarrow \infty$? This question is answered in \cite{Masuda}. Later, the decay rate of a weak solution is obtained in \cite{Schonbek} by using the Fourier splitting method with $u_{0}\in L^{2}\cap L^{1}$: $\|u(t)\|^2_{L^{2}}\leq C_{0}(1+t)^{-\frac{1}{2}}$. This is improved in \cite{Schonbek 2}: if $u_{0}\in L^{2}\cap L^{1}$,
\eqn \label{Decay Schonbek}
\|u(t)\|^{2}_{L^{2}}\leq C_{0}(1+t)^{-\frac{3}{2}}.
\een

We now define an invariant space of (\ref{NSE}). We say $\mathbb{X}$ is an invariant space when $u_{0} \in \mathbb{X}$ implies $u\in L^{\infty}([0,\infty);\mathbb{X})$. Since  $u(t)$ is also in $L^{1}$ for all $t>0$, which is proved in Appendix \ref{A 1}, $\mathbb{X}=L^{2}\cap L^{1}$ is an invariant space of (\ref{NSE}). In this paper, we seek  to find more invariant spaces which we employ to improve (\ref{Decay Schonbek}). When $u_{0}\in L^{2}\cap L^{1}$, 
\[
\text{\cite{Schonbek}}: \ \sup_{\xi \in S(t)}|\xi|\left|\widehat{u}(t,\xi)\right|\leq C_{0}, \quad \text{\cite{Schonbek 2}}: \ \sup_{\xi \in S(t)}\left|\widehat{u}(t,\xi)\right|\leq C_{0},\quad S(t):=\lt\{\xi\in\R^3 : |\xi|^2\le \frac{3}{2(1+t)}\rt\}, 
\]
where $C_{0}$ only depends on $u_{0}\in L^{2}\cap L^{1}$. So, to improve (\ref{Decay Schonbek}), it is natural to impose initial data in the space of pseudo-measures \cite{Cannone}:  
\[
\my^\sigma  := \Big\{ f \in \mathcal{S}'(\R^3) \ : \ \sup_{\xi \in \R^3} |\xi|^\sigma |\widehat f(\xi)| <\infty\Big\}.
\]

\begin{theorem} \label{Theorem 1} \upshape
Let $u_{0}\in L^{2}\cap \mathcal{Y}^{\sigma}$ with $\dv u_{0}=0$ and $\sigma \in [-1,1]$. Let $u$ be  a weak solution of (\ref{NSE}) satisfying (\ref{NSE energy}). Then, $u\in L^\infty([0,\infty);L^{2}\cap \my^{\sigma})$ and  $u$ decays in time as follows:
\eqn \label{Theorem 1 Decay}
\|u(t)\|_{L^2}^2  \le C_{0}(1+t)^{-\frac 32 + \sigma} \quad \text{for all $t>0$.}
\een
\end{theorem}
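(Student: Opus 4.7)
The plan is to prove both conclusions—the invariance $u\in L^\infty([0,\infty);\my^\sigma)$ and the decay bound \eqref{Theorem 1 Decay}—jointly, by coupling Schonbek's Fourier splitting with a Duhamel estimate through a short bootstrap. For the Fourier-splitting step I would start from the differential form of \eqref{NSE energy}, $\frac{d}{dt}\|u\|_{L^2}^2+2\|\nabla u\|_{L^2}^2\le 0$, and split along $S(t)=\{|\xi|^2\le g(t)\}$ to get
\begin{equation*}
\frac{d}{dt}\|u(t)\|_{L^2}^2 + 2g(t)\|u(t)\|_{L^2}^2 \;\le\; 2g(t)\int_{S(t)}|\widehat u(t,\xi)|^2\,d\xi.
\end{equation*}
Once the invariance is in hand, the pointwise bound $|\widehat u(t,\xi)|\le C_0|\xi|^{-\sigma}$ yields $\int_{S(t)}|\widehat u|^2\,d\xi\le C_0\, g(t)^{3/2-\sigma}$, the radial integral $\int_{|\xi|\le R}|\xi|^{-2\sigma}d\xi$ being finite exactly because $\sigma\le 1<3/2$. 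Choosing $g(t)=\alpha(1+t)^{-1}$ with any $\alpha>\frac{3-2\sigma}{4}$, multiplying by the integrating factor $(1+t)^{2\alpha}$, and integrating in time then produce exactly $\|u(t)\|_{L^2}^2\le C_0(1+t)^{-3/2+\sigma}$.

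The main obstacle is therefore the invariance itself. I would work from the Duhamel representation in Fourier variables,
\begin{equation*}
\widehat u(t,\xi) = e^{-t|\xi|^2}\widehat u_0(\xi) - \int_0^t e^{-(t-s)|\xi|^2}\, i\xi\cdot \widehat{\mathbb P(u\otimes u)}(s,\xi)\,ds,
\end{equation*}
together with the crude estimate $|\widehat{u\otimes u}(s,\xi)|\le \|u(s)\|_{L^2}^2$. Multiplying by $|\xi|^\sigma$, the heat-kernel term is bounded by $\|u_0\|_{\my^\sigma}$, but using only the bare energy bound on the nonlinear term yields $\|u_0\|_{L^2}^2\,|\xi|^{\sigma-1}(1-e^{-t|\xi|^2})$, whose supremum in $\xi$ is uniform in $t$ only when $\sigma=1$ and otherwise grows like $t^{(1-\sigma)/2}$. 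To close the estimate for $\sigma<1$, I would feed back a decay rate $\|u(s)\|_{L^2}^2\le C(1+s)^{-\beta}$ into the Duhamel integral and split it at $s=t/2$: on $[0,t/2]$ the Gaussian factor gives $\sup_{\xi}|\xi|^{\sigma+1}e^{-t|\xi|^2/2}\le Ct^{-(\sigma+1)/2}$, while on $[t/2,t]$ the factor $(1+s)^{-\beta}$ is controlled by $(1+t)^{-\beta}$ and the remaining $\xi$-supremum is $\le Ct^{(1-\sigma)/2}$. Both contributions stay uniform in $t$ once $\beta\ge(1-\sigma)/2$, a nonnegative requirement precisely on $\sigma\in[-1,1]$, which is also the source of the lower endpoint.

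Since the Fourier-splitting decay needs the $\my^\sigma$ bound while the $\my^\sigma$ bound needs a decay rate, I would close the circle by iteration. Plugging the trivial estimate $\beta=0$ into the Fourier-splitting bound above already gives a preliminary rate $\|u\|_{L^2}^2\le C(1+t)^{-1/2}$; reinserting this improves $\beta$ to $\tfrac12$, and one or two further rounds bring $\beta$ up to $3/2-\sigma$. Each round strictly improves the exponent by a fixed positive amount until the target is reached, so the bootstrap terminates in finitely many steps, at which point both the invariance $u\in L^\infty([0,\infty);\my^\sigma)$ and the decay \eqref{Theorem 1 Decay} are established.
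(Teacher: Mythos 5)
Your proposal is correct and follows essentially the same route as the paper: a finite bootstrap in which the Duhamel formula in Fourier variables (with $|\widehat{u\otimes u}|\le\|u\|_{L^2}^2$ and successively improved decay rates fed back in) yields the uniform $\my^\sigma$ bound, while Fourier splitting with the low-frequency bound $|\xi|^\sigma|\widehat u|\le C_0$ converts that into the decay $(1+t)^{-3/2+\sigma}$, exactly as in the paper's Lemma \ref{main lemma 1} and the three-case argument of Section \ref{sec:NSE}. The only differences are cosmetic: you bound the time-convolution integrals by splitting at $t/2$ and using Gaussian sup estimates, where the paper uses Beta-function identities, and you carry the splitting weight $g(t)=\alpha(1+t)^{-1}$ instead of the paper's $N/(1+t)$.
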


We have the same decay rate of (\ref{Decay Schonbek}) when $\sigma=0$, but $L^{1}\subset \my^0$. Moreover, we improve (\ref{Decay Schonbek}) because $\sigma$ can be negative. Theorem \ref{Theorem 1} can be proved by using the approach in \cite{Wiegner}: see Appendix \ref{A 2}. However, we are not able to  use the same approach to Hall-MHD equations: instead, we take another method that treat (\ref{NSE}) and Hall-MHD equations in the same way.

When $u_{0}\in L^{2}\cap L^{1}$, we already mention that $\|u(t)\|_{L^{1}}\leq C_{0}$ for all $t>0$. By combining this with (\ref{Decay Schonbek}), the decay rate of $L^{p}$ norms for $p\in (1,2)$ is given by 
\eqn \label{decay of Lp norm}
\|u(t)\|_{L^{p}}\leq \|u(t)\|^{\frac{2}{p}-1}_{L^{1}}\|u(t)\|^{2(1-\frac{1}{p})}_{L^{2}}\leq C_{0}\|u(t)\|^{2(1-\frac{1}{p})}_{L^{2}}\leq C_{0}(1+t)^{-\frac{3}{2}(1-\frac{1}{p})}.
\een
Although we do not have the uniform $L^{1}$ bound of $u$ when $u_{0}\in L^{2}\cap \mathcal{Y}^{\sigma}$, we utilize the decay rate in Theorem \ref{Theorem 1} to bound $u$ in the negative Sobolev spaces.

\begin{corollary} \label{Corollary 1}\upshape
If $\sigma \in [-1,1]$ and $0<\delta<\frac{3}{2}-\sigma$,
\eqn \label{decay of negative norm}
\left\|u(t)\right\|^{2}_{\dot{H}^{-\delta}} \leq C_{0}(1+t)^{-\frac{3}{2}+\sigma+\delta}.
\een
\end{corollary}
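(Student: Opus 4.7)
The plan is to run a Fourier splitting, in the spirit of Schonbek, that interpolates between the pseudo-measure control of low frequencies and the Plancherel identity for high frequencies. Writing
\[
\|u(t)\|_{\dot{H}^{-\delta}}^{2} \;=\; C\int_{\R^{3}}|\xi|^{-2\delta}|\widehat{u}(t,\xi)|^{2}\,d\xi,
\]
I would split the integration domain into a ball $\{|\xi|\le R\}$ and its complement, where $R=R(t)>0$ is a scale to be optimized.

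On the ball, I would invoke Theorem \ref{Theorem 1} to bound $|\widehat{u}(t,\xi)|\le \|u(t)\|_{\my^{\sigma}}|\xi|^{-\sigma}\le C_{0}|\xi|^{-\sigma}$ uniformly in $t$, so that
\[
\int_{|\xi|\le R}|\xi|^{-2\delta}|\widehat{u}(t,\xi)|^{2}\,d\xi \;\le\; C_{0}^{2}\int_{|\xi|\le R}|\xi|^{-2(\delta+\sigma)}\,d\xi \;\le\; C_{0}\,R^{\,3-2(\delta+\sigma)},
\]
where the last integral converges precisely under the hypothesis $\delta+\sigma<\tfrac{3}{2}$. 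On the complement I would pull out $|\xi|^{-2\delta}\le R^{-2\delta}$ and combine Plancherel with the $L^{2}$ decay (\ref{Theorem 1 Decay}) to get
\[
\int_{|\xi|>R}|\xi|^{-2\delta}|\widehat{u}(t,\xi)|^{2}\,d\xi \;\le\; R^{-2\delta}\|u(t)\|_{L^{2}}^{2} \;\le\; C_{0}\,R^{-2\delta}(1+t)^{-\frac{3}{2}+\sigma}.
\]

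The two contributions match if I pick the parabolic scale $R=(1+t)^{-1/2}$: the low-frequency bound becomes $C_{0}(1+t)^{-(3-2\delta-2\sigma)/2}=C_{0}(1+t)^{-\frac{3}{2}+\sigma+\delta}$, and the high-frequency bound becomes $C_{0}(1+t)^{\delta}(1+t)^{-\frac{3}{2}+\sigma}=C_{0}(1+t)^{-\frac{3}{2}+\sigma+\delta}$. Summing yields (\ref{decay of negative norm}). There is no real obstacle: the argument uses only the two ingredients already established in Theorem \ref{Theorem 1}, namely the uniform-in-time bound $u\in L^{\infty}([0,\infty);\my^{\sigma})$ and the decay $\|u(t)\|_{L^{2}}^{2}\lesssim (1+t)^{-\frac{3}{2}+\sigma}$, while the condition $\delta+\sigma<\tfrac{3}{2}$ is exactly what is needed to keep the low-frequency integral finite.
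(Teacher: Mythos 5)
Your proof is correct and follows essentially the same route as the paper: a frequency splitting at a scale $R$, bounding the low frequencies via the uniform $\my^{\sigma}$ bound and the high frequencies via Plancherel together with the $L^{2}$ decay from Theorem \ref{Theorem 1}, with the hypothesis $\delta<\tfrac{3}{2}-\sigma$ guaranteeing convergence of the low-frequency integral. The only cosmetic difference is that the paper chooses the splitting radius by equalizing the two contributions, which packages the estimate as an interpolation inequality between $\|u\|_{\my^{\sigma}}$ and $\|u\|_{L^{2}}$ before inserting the decay rates, whereas you fix the parabolic scale $R=(1+t)^{-1/2}$ directly; both give the identical rate.
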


In particular, \eqref{decay of Lp norm} with $p\in(1,2)$ and \eqref{decay of negative norm} with $\sigma=0$ and $\delta\in(0,\frac{3}{2})$ have the same decay rate since $L^{p}\subset \dot{H}^{-\delta}$ with $1-\frac{1}{p}=\frac{1}{2}-\frac{\delta}{3}$. In \cite{Schonbek Wiegner}, the decay of higher-order norms for solutions is derived using the eventual regularity of weak solutions to \eqref{NSE} and the decay rate of $\|u(t)\|_{L^2}$. As a corollary of Theorem \ref{Theorem 1} and \cite{Schonbek Wiegner}, we can establish the following decay rates; the proof is omitted for brevity.

\begin{corollary}\label{Corollary 2} \upshape
Let $u_{0}\in L^{2}\cap \mathcal{Y}^{\sigma}$ with $\dv u_{0}=0$ and $\sigma \in [-1,1]$. For each $k\in \mathbb{N}$, there exist $T_{0}=T_0(u_0)>0$ and $C_{k}=C_k(u_0,k)>0$ such that 
\[
\left\|\nabla^{k}u(t)\right\|^{2}_{L^{2}}\leq C_{k} (1+t)^{-\frac 32 + \sigma-k } \quad \text{for all $t> T_{0}$}.
\]
\end{corollary}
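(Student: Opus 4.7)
The plan is to combine the decay rate of Theorem \ref{Theorem 1} with the eventual regularity of Leray--Hopf weak solutions to \eqref{NSE} and to carry out an induction on $k$ driven by a Fourier-splitting estimate at the level of $\nabla^k u$. First, since $u$ obeys \eqref{NSE energy}, one has $\int_0^\infty \|\nabla u(\tau)\|_{L^2}^2\,d\tau<\infty$, and Leray's epochs-of-regularity argument, combined with the Serrin criterion (as used in \cite{Schonbek Wiegner}), produces $T_0=T_0(u_0)>0$ such that $u\in C^\infty([T_0,\infty)\times\R^3)$. For $t>T_0$ the classical higher-order energy identity
\[
\frac{1}{2}\frac{d}{dt}\|\nabla^k u\|_{L^2}^2+\|\nabla^{k+1} u\|_{L^2}^2=-\langle \nabla^k(u\cdot\nabla u),\nabla^k u\rangle
\]
is rigorous for every $k\in\N$, and $u(t)\in\my^\sigma$ uniformly in $t$ by Theorem \ref{Theorem 1}.

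I proceed by induction on $k$, the base case $k=0$ being Theorem \ref{Theorem 1}. Assuming the claimed bound for $0,1,\ldots,k-1$, I estimate the nonlinear term via Leibniz and Gagliardo--Nirenberg as
\[
|\langle \nabla^k(u\cdot\nabla u),\nabla^k u\rangle|\le \tfrac{1}{2}\|\nabla^{k+1} u\|_{L^2}^2+C\sum_{j<k}\|\nabla^{j} u\|_{L^2}^{a_j}\|\nabla^{k} u\|_{L^2}^{b_j},
\]
with each lower-order factor controlled by the inductive hypothesis. For the linear part I apply Fourier splitting at level $k$: with $S(t)=\{\xi\in\R^3:|\xi|^2\le C/(1+t)\}$,
\[
\|\nabla^{k+1} u\|_{L^2}^2\ge \frac{C}{1+t}\lt(\|\nabla^k u\|_{L^2}^2-\int_{S(t)}|\xi|^{2k}|\widehat u(t,\xi)|^2\,d\xi\rt),
\]
and the low-frequency residue is controlled using $u(t)\in\my^\sigma$:
\[
\int_{S(t)}|\xi|^{2k}|\widehat u|^2\,d\xi\le \|u(t)\|_{\my^\sigma}^2\int_{S(t)}|\xi|^{2k-2\sigma}\,d\xi\ls (1+t)^{-\frac{3}{2}+\sigma-k},
\]
which is integrable near the origin since $2k-2\sigma+3>0$ for $\sigma\in[-1,1]$ and $k\ge 0$. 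Taking $C$ sufficiently large and combining these bounds yields a differential inequality
\[
\frac{d}{dt}\|\nabla^k u\|_{L^2}^2+\frac{2C}{1+t}\|\nabla^k u\|_{L^2}^2\le C'(1+t)^{-\frac{5}{2}+\sigma-k}+(\text{inductive remainders}),
\]
which integrates (via the integrating factor $(1+t)^{2C}$ on $[T_0,t]$) to $\|\nabla^k u(t)\|_{L^2}^2\le C_k(1+t)^{-\frac{3}{2}+\sigma-k}$ for $t>T_0$.

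The hard part is the nonlinear bookkeeping at each induction step: one must verify that every Leibniz piece decays at least as fast as the linear target rate produced by the Fourier splitting. This requires $L^\infty$ or $L^6$ control on intermediate derivatives, recovered by interpolating the inductive $L^2$-decay of $\nabla^j u$ for $j\le k-1$ against the base decay of $\|u\|_{L^2}$. The procedure is systematic, and invoking the framework of \cite{Schonbek Wiegner} removes the need to rewrite the combinatorics, which is why the authors omit the proof.
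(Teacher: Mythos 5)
Your proposal matches the paper's intended argument: the paper omits the proof and derives the corollary from Theorem \ref{Theorem 1} together with the eventual-regularity plus Fourier-splitting machinery of \cite{Schonbek Wiegner}, which is exactly what you reconstruct (and what the paper carries out explicitly in the Hall-MHD analogue, Lemma \ref{lemma 2} and Theorem \ref{Theorem 4}). Your only variations --- controlling the low-frequency residue by the uniform $\my^{\sigma}$ bound rather than by the $L^{2}$ decay itself, and leaving the Leibniz bookkeeping to the cited framework (where it is most cleanly closed by absorbing the nonlinearity into the dissipation via smallness of $\|u\|_{L^{3}}$ for large times, as in \eqref{u_derivative_energy}) --- give the same rate and are consistent with the paper's level of detail.
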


If we take $u_{0}$ in a function space other than $L^{2}$, we normally impose a smallness condition on $u_{0}$ derived from its scaling invariance: $u_{0}(x) \longmapsto \lambda u_{0}(\lambda x)$. Along these lines, the best result is that of \cite{Koch Tataru}, which employs initial data in $\text{BMO}^{-1}$. However, since we aim to restrict the function spaces to those defined in Fourier variables, we investigate \eqref{NSE} with initial data in the Lei-Lin spaces $\mx^\sigma$:
\[
\mx^\sigma := \Big\{ f \in \mathcal{S}'(\R^3) \ : \ \intr |\xi|^\sigma |\widehat f(\xi)|\,d\xi <\infty\Big\}, \quad \sigma\in\R.
\]
In \cite{Lei Lin}, the global well-posedness of (\ref{NSE}) is established with small initial data $u_0 \in \mathcal{X}^{-1}$, where $\mathcal{X}^{-1} \subset \text{BMO}^{-1}$. Moreover, the spatial analyticity of the solution in \cite{Lei Lin} is established in \cite{Ambrose, Bae}, providing decay rates for the derivatives of $u$. The decay of $\|u(t)\|_{\mathcal{X}^{-1}}$ when $u_0 \in \mathcal{X}^{-1} \cap L^2$ is also studied in \cite{Benameur}. In this paper, we provide a decay rate for $\|u(t)\|_{\mathcal{X}^{-1}}$ under the condition that $\|u_0\|_{\mathcal{X}^{-1}}$ is sufficiently small and $u_0 \in \mathcal{Y}^\sigma$. More generally, we derive a decay rate for $\|u(t)\|_{\mathcal{X}^{k-1}}$ for any $k \geq 0$.

\begin{theorem} \label{Theorem 2} \upshape
Let $k\geq0$ and $\sigma \in [-1,1]$. Let $u_0 \in \mx^{k-1}\cap \mx^{-1}\cap \my^\sigma$ with $\dv u_0=0$. There exists $\epsilon>0$ such that if $\|u_0\|_{\mx^{-1}} \leq \epsilon$, then (\ref{NSE}) admits a unique  solution $u \in L^{\infty}([0,\infty); \mx^{k-1} \cap \mx^{-1}\cap \my^{\sigma}) \cap L^1((0,\infty); \mx^{k+1} \cap \mx^1)$. Furthermore, 
\[
\begin{split}
&\|u(t)\|_{\mx^{k-1}} \leq C_{k}(1+t)^{-1+\frac{\sigma}{2}-\frac{k}{2}} \quad \text{for all $t>0$.}
\end{split}
\]
\end{theorem}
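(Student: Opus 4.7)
The plan is a three-stage argument: global existence and higher Lei--Lin regularity via the small-data theory of \cite{Lei Lin}, uniform propagation of the pseudo-measure bound $\|u(t)\|_{\my^\sigma}\le C_0$, and a Fourier-splitting decay argument in the $\mx^{k-1}$ scale driven by the $\my^\sigma$ control of the low frequencies.

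For the first stage, smallness of $\|u_0\|_{\mx^{-1}}$ places us in the Lei--Lin framework and produces the unique global solution with $\|u\|_{L^\infty_t\mx^{-1}}+\|u\|_{L^1_t\mx^1}\le C\epsilon$. Multiplying the Fourier side of \eqref{NSE} by $\mathrm{sgn}(\hat u)|\xi|^{k-1}$ and integrating in $\xi$ gives
\[
\tfrac{d}{dt}\|u\|_{\mx^{k-1}}+\|u\|_{\mx^{k+1}}\lesssim\|u\otimes u\|_{\mx^{k}}.
\]
Using $|\xi|^k\lesssim|\eta|^k+|\xi-\eta|^k$ in the convolution, together with the Cauchy--Schwarz interpolation $\|u\|_{\mx^0}\le\|u\|_{\mx^{-1}}^{1/2}\|u\|_{\mx^1}^{1/2}$ and its shifted analogue at level $k$, followed by Young's inequality, yields
\[
\|u\otimes u\|_{\mx^k}\lesssim \|u\|_{\mx^{-1}}\|u\|_{\mx^{k+1}}+\|u\|_{\mx^1}\|u\|_{\mx^{k-1}}.
\]
For $\epsilon$ small the $\mx^{k+1}$ term is absorbed and a Gronwall estimate against the integrable weight $\|u\|_{\mx^1}$ yields $u\in L^\infty_t\mx^{k-1}\cap L^1_t\mx^{k+1}$. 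The pseudo-measure bound is then propagated pointwise in $\xi$ from Duhamel, using the heat bound $|\xi|^{\sigma+1}e^{-|\xi|^2\tau}\lesssim \tau^{-(\sigma+1)/2}$ (which is time-integrable for $\sigma<1$, with a finer splitting at the borderline) and a bilinear inequality that pairs one $\my^\sigma$ factor against an integrable-in-time factor, closed by a bootstrap.

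For the decay, insert the Fourier-splitting lower bound
\[
\|u\|_{\mx^{k+1}}\ge R(t)^2\|u\|_{\mx^{k-1}}-R(t)^2\int_{|\xi|\le R(t)}|\xi|^{k-1}|\hat u(t,\xi)|\,d\xi
\]
into the first-stage inequality with $R(t)^2=\alpha/(1+t)$, and control the remainder by the $\my^\sigma$ bound via $\int_{|\xi|\le R(t)}|\xi|^{k-1-\sigma}d\xi\lesssim R(t)^{k+2-\sigma}$ (valid since $k+2-\sigma>0$). This produces
\[
\frac{d}{dt}\|u\|_{\mx^{k-1}}+\frac{\alpha/2}{1+t}\|u\|_{\mx^{k-1}}\lesssim (1+t)^{-(k+4-\sigma)/2}+\|u\|_{\mx^1}\|u\|_{\mx^{k-1}}.
\]
Multiplying by the integrating factor $(1+t)^{\alpha/2}\exp(-C\int_0^t\|u\|_{\mx^1}\,ds)$, which is bounded above since $\int_0^\infty\|u\|_{\mx^1}\,ds<\infty$, and integrating, the source integral behaves as $(1+t)^{(\alpha-k-2+\sigma)/2}$ for $\alpha>2+k-\sigma$, yielding $\|u(t)\|_{\mx^{k-1}}\le C_k(1+t)^{-1+\sigma/2-k/2}$. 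The main obstacle is the uniform $\my^\sigma$ propagation: the non-locality of $|\hat u|*|\hat u|$ combined with the sup-in-$\xi$ nature of $\my^\sigma$ makes the required bilinear inequality subtle, and this is the technical heart of the argument; once it is secured the remaining two stages are routine extensions of the Lei--Lin and Fourier-splitting templates.
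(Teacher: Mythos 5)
Your stages 1 and 3 are sound and close to the paper's route: the paper likewise reduces the $\mx^{k-1}$ estimate to the convolution bound with $|\xi|^k\lesssim|\xi-\eta|^k+|\eta|^k$, but it absorbs the \emph{entire} nonlinearity into the dissipation via $\|u\|_{\mx^0}\|u\|_{\mx^k}\le C\|u\|_{\mx^{-1}}\|u\|_{\mx^{k+1}}\le C\epsilon\|u\|_{\mx^{k+1}}$ (interpolation \eqref{intp_x 3}), getting the clean inequality $\frac{d}{dt}\|u\|_{\mx^{k-1}}+\theta\|u\|_{\mx^{k+1}}\le0$ and then invoking Lemma \ref{main lemma 2}; your variant, which keeps the term $\|u\|_{\mx^1}\|u\|_{\mx^{k-1}}$ and removes it later with the integrating factor $\exp(-C\int_0^t\|u\|_{\mx^1})$ (harmless since $\|u\|_{\mx^1}\in L^1_t$), is a legitimate alternative and your Fourier-splitting computation of the exponent $-1+\frac{\sigma}{2}-\frac{k}{2}$ is correct.

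The genuine gap is stage 2: the uniform bound $u\in L^\infty([0,\infty);\my^\sigma)$ is part of the theorem's conclusion and is exactly the input ($\sup_t\sup_{|\xi|\le1}|\xi|^\sigma|\widehat u|\le C_\ast$) your splitting argument needs, yet you leave the ``required bilinear inequality'' unproven, explicitly flagging it as the technical heart. Moreover, the route you sketch would not close as stated: crudely bounding the heat factor by $|\xi|^{1+\sigma}e^{-(t-\tau)|\xi|^2}\lesssim(t-\tau)^{-\frac{1+\sigma}{2}}$ and pairing with a time-integrable quantity gives a bound that grows in $t$, because the companion factor $\|u(\tau)\|_{\my^\sigma}\|u(\tau)\|_{\mx^{-\sigma}}$ is not known to decay and $\|u\|_{\mx^{-\sigma}}$ is not uniformly bounded (it is only controlled through $\mx^{-1}$ and $L^1_t\mx^1$). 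The paper closes this step by splitting the symbol with $1\le\frac{|\eta|^\sigma}{2|\xi-\eta|^\sigma}+\frac{|\xi-\eta|^\sigma}{2|\eta|^\sigma}$ so that one factor lands in $\my^\sigma$ and the other in $\mx^{-\sigma}$, interpolating $\|u\|_{\mx^{-\sigma}}\le C\|u\|_{\mx^{-1}}^{\frac{1+\sigma}{2}}\|u\|_{\mx^1}^{\frac{1-\sigma}{2}}$, and applying H\"older in time with exponents $\frac{2}{1+\sigma},\frac{2}{1-\sigma}$ while \emph{retaining} the Gaussian, so that $\bigl(\int_0^t|\xi|^2e^{-c(t-\tau)|\xi|^2}\,d\tau\bigr)^{\frac{1+\sigma}{2}}$ is bounded uniformly in $t$ and $\xi$; this yields $\sup_{\tau\le t}\|u(\tau)\|_{\my^\sigma}\le\|u_0\|_{\my^\sigma}+C\epsilon\sup_{\tau\le t}\|u(\tau)\|_{\my^\sigma}$, closed by $2C\epsilon<1$, with the endpoint cases $\sigma=1$ and $\sigma=-1$ treated separately (using $\alpha=1$ in the splitting). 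Without supplying an argument of this kind, your proof of both the membership $u\in L^\infty\my^\sigma$ and the decay rate is incomplete.
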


The smallness condition in \cite{Lei Lin} is given by $\|u_0\|_{\mx^{-1}}<1$ (assuming $\mu=1$). However, for the proof of Theorem \ref{Theorem 2}, we require $\|u_0\|_{\mx^{-1}} \leq \epsilon$, where $\epsilon > 0$ may be smaller than 1.The case $k=0$ shows the decay rate of the solution in \cite{Lei Lin}, and our work extends this result to include the case $k>0$. A similar argument is applied to Theorem \ref{Theorem 5} below.

\subsection{Hall-magnetohydrodynamic equations}
We observe that the methodology developed for \eqref{NSE} can be applied to the incompressible and resistive Hall-MHD equations:
\begin{subequations}\label{Hall MHD}
\begin{align}
& u_{t}  +u\cdot \nabla u +\nabla p-\mu\Delta u=(\nabla \times B)\times B, \label{Hall MHD a} \\
&B_{t}  -\nabla \times (u\times B) -\nu\Delta B +\nabla \times ((\nabla \times B)\times B)=0,  \label{Hall MHD b} \\
& \dv u=0, \quad \dv B=0,
\end{align}
\end{subequations}
where $u$ is the plasma velocity field, $p$ is the pressure, and $B$ is the magnetic field, respectively. $\mu,\nu>0$ are viscosity and resistivity coefficients, and we set $\mu=\nu=1$ without loss of generality. The term $\nabla \times ((\nabla \times B) \times B)$ is referred to as the Hall term. The Hall-MHD system \eqref{Hall MHD} plays a crucial role in describing a wide range of physical phenomena \cite{Balbus, Forbes, Homann, Lighthill, Mininni, Shalybkov, Shay, Wardle}. Moreover, extensive mathematical studies have been conducted on \eqref{Hall MHD} following the seminal works of \cite{Acheritogaray, Chae-Degond-Liu, Chae Lee, Chae Schonbek}; for a comprehensive list of known results, we refer the reader to \cite{BK}.

 We begin with a weak solution of (\ref{Hall MHD}) with divergence-free initial data $(u_{0}, B_{0})\in L^{2}$. The existence of a weak solution satisfying the following energy inequality  is proved in \cite{Chae-Degond-Liu}:
\eqn \label{HMHD energy}
\norm{u(t)}{L^{2}}^{2}+\norm{B(t)}{L^{2}}^{2}+2\int^{t}_{0}\norm{\nabla u(\tau)}{L^{2}}^{2}\,d\tau+2\int^{t}_{0}\norm{\nabla B(\tau)}{L^{2}}^{2}\,d\tau\leq \norm{u_{0}}{L^{2}}^{2}+\norm{B_{0}}{L^{2}}^{2} \quad \text{for all $t>0$. }
\een  
Moreover, the decay of a weak solution is established in \cite{Chae Schonbek} using the Fourier splitting method: if $(u_{0}, B_{0})\in L^{2}\cap L^{1}$, then $(u,B)$ decays in time as follows:
\eqn \label{Hall MHD Weak decay}
\|u(t)\|^{2}_{L^{2}}+\|B(t)\|^{2}_{L^{2}}\leq C_{0}(1+t)^{-\frac{3}{2}}.
\een

We note that $L^1$ remains an invariant space for $u$, which can be established as in Appendix \ref{A 1} using \eqref{HMHD energy} and \eqref{Hall MHD Weak decay}. However, the presence of the Hall term prevents us from proving that $L^1$ is an invariant space for $B$. Analogous to the case of \eqref{NSE}, we present a result that improves upon \eqref{Hall MHD Weak decay}.

\begin{theorem}\label{Theorem 3} \upshape
Let $(u_{0}, B_{0})\in (L^{2}\cap \mathcal{Y}^{\sigma_{1}})\times (L^{2}\cap \mathcal{Y}^{\sigma_{2}})$ with $\dv u_{0}=\dv B_{0}=0$, $\sigma_1 \in [-1,1]$ and $\sigma_2 \in [-1,0]$. Let $(u,B)$ be a weak solution of (\ref{Hall MHD}) satisfying (\ref{HMHD energy}). Then, $u\in L^\infty([0,\infty);\my^{\sigma_1})$ and   
\[
\|u(t)\|_{L^2}^2+\|B(t)\|_{L^2}^2 \le C_{0}(1+t)^{-\frac 32 + \max{\{\sigma_1,\sigma_2\}} } \quad \text{for all $t>0$.}
\]
\end{theorem}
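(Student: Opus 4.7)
The plan parallels the proof of Theorem~\ref{Theorem 1}: I aim to propagate a pseudo-measure bound on $u$, establish a low-frequency pointwise bound on $\hat B$, and combine both with Schönbek's Fourier splitting applied to the joint energy $\|u\|_{L^2}^2+\|B\|_{L^2}^2$, closing the three ingredients simultaneously by bootstrapping on the decay rate. A useful preliminary is that the Hall term drops out of the $L^2$ energy identity (since $((\nabla\times B)\times B)\cdot(\nabla\times B)\equiv 0$), so \eqref{HMHD energy} already treats $u$ and $B$ on equal footing.

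For the $\mathcal{Y}^{\sigma_1}$ bound on $u$, I would rewrite the Lorentz force as $(\nabla\times B)\times B=\nabla\cdot(B\otimes B)-\tfrac12\nabla|B|^2$ and apply the Leray projector to \eqref{Hall MHD a}; Duhamel's formula in frequency then reads
\[
\hat u(t,\xi)=e^{-|\xi|^2 t}\hat u_0(\xi)+\int_0^t e^{-|\xi|^2(t-s)}G(s,\xi)\,ds,\qquad |G(s,\xi)|\leq C|\xi|\bigl(\|u(s)\|_{L^2}^2+\|B(s)\|_{L^2}^2\bigr).
\]
The $|\xi|$ gain in $G$, combined with \eqref{HMHD energy} and the decay of the combined energy produced below, yields $|\xi|^{\sigma_1}|\hat u(t,\xi)|\leq C_0$ uniformly in $(t,\xi)$ for $\sigma_1\in[-1,1]$.

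The Hall term's extra derivative obstructs global pseudo-measure propagation for $B$, but only pointwise control of $\hat B$ on the Schönbek ball $S(t)=\{|\xi|^2\leq 3/(2(1+t))\}$ is needed. From \eqref{Hall MHD b} together with $|\widehat{u\times B}(\xi)|\leq\|u\|_{L^2}\|B\|_{L^2}$ and $|\widehat{(\nabla\times B)\times B}(\xi)|\leq C|\xi|\|B\|_{L^2}^2$, Duhamel gives
\[
|\hat B(t,\xi)|\leq|\hat B_0(\xi)|+C\int_0^t e^{-|\xi|^2(t-s)}(|\xi|+|\xi|^2)\bigl(\|u(s)\|_{L^2}^2+\|B(s)\|_{L^2}^2\bigr)ds,
\]
and the smallness $|\xi|\lesssim(1+t)^{-1/2}$ on $S(t)$, combined with decay of the joint energy, produces $|\hat B(t,\xi)|\leq C_0|\xi|^{-\sigma_2}$ on $S(t)$ for $\sigma_2\in[-1,0]$. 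Feeding these two pointwise bounds into Schönbek's splitting
\[
\frac{d}{dt}\bigl(\|u\|_{L^2}^2+\|B\|_{L^2}^2\bigr)+\frac{3}{1+t}\bigl(\|u\|_{L^2}^2+\|B\|_{L^2}^2\bigr)\leq\frac{3}{1+t}\int_{S(t)}\bigl(|\hat u|^2+|\hat B|^2\bigr)d\xi,
\]
the right-hand side is controlled by $C_0(1+t)^{-5/2+\max\{\sigma_1,\sigma_2\}}$ (using $\sigma_1,\sigma_2<3/2$ for the radial integration), and integrating against the factor $(1+t)^3$ delivers the claimed decay.

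The three steps above are coupled and must be closed simultaneously via bootstrap: one starts from the crude $(1+t)^{-1/2}$ decay of \cite{Schonbek} (which requires only the $L^2$ bound), feeds it back into the Duhamel estimates for $\hat u$ and $\hat B$, and iterates until the decay stabilises at the target rate $(1+t)^{-3/2+\max\{\sigma_1,\sigma_2\}}$. The principal obstacle is the Hall term, whose second-derivative structure is precisely why $\sigma_2$ is restricted to $[-1,0]$ and no global $\mathcal{Y}^{\sigma_2}$ statement is asserted for $B$; controlling the Hall contribution on $S(t)$ while keeping the bootstrap consistent across all three steps is the most delicate part of the argument.
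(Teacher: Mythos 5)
Your proposal follows essentially the same route as the paper: Duhamel in Fourier variables, propagation of the pseudo-measure bound for $u$, a low-frequency pointwise bound for $\widehat B$, and Fourier splitting (packaged in the paper as Lemma \ref{main lemma 1}), closed by a bootstrap starting from the $(1+t)^{-1/2}$ rate that needs only the energy inequality \eqref{HMHD energy}; your use of the identity $(\nabla\times B)\times B=\dv(B\otimes B)-\tfrac12\nabla|B|^2$ to pull the derivatives outside the nonlinearities is also the paper's device. The one step you overclaim is the uniform-in-time bound $|\widehat B(t,\xi)|\le C_0|\xi|^{-\sigma_2}$ on $S(t)$ for all admissible $(\sigma_1,\sigma_2)$: when $\sigma_2<0$ and $\sigma_1-\tfrac{\sigma_2}{2}>1$, the Duhamel contribution $\int_0^t|\xi|^{1+\sigma_2}e^{-(t-\tau)|\xi|^2}\|u(\tau)\|_{L^2}\|B(\tau)\|_{L^2}\,d\tau$ grows like $(1+t)^{\sigma_1-\frac{\sigma_2}{2}-1}$, which is precisely why the paper asserts uniform $\mathcal{Y}^{\sigma_2}$-type control of $B$ only under the extra condition \eqref{index_condition} (Corollary \ref{Corollary 3}) and, in the theorem itself, argues by cases: for $\sigma_1\in[0,1]$ it uses only $\sup_{|\xi|\le1}|\widehat B(t,\xi)|\le C_0$, which combined with $|\xi|^{\sigma_1}\le1$ already yields the exponent $\max\{\sigma_1,\sigma_2\}=\sigma_1$ in the splitting, while the exponent-$\sigma_2$ bound for $B$ is proved only when $\sigma_1\le0$. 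Since your final splitting estimate only requires the exponent $\max\{\sigma_1,\sigma_2\}$, this repair is cosmetic and your argument closes exactly as in the paper.
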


In addition to improving \eqref{Hall MHD Weak decay}, our result shows that $u_{0}$ and $B_{0}$ need not belong to the same space. Due to the Hall term, the range of $\sigma_{2}$ is required to be one less than that of $\sigma_{1}$. We also observe that, unlike $u$, we have $B \in L^\infty([0,T); \mathcal{Y}^{\sigma_2})$ for any finite $T > 0$; that is, $\|B(t)\|_{\mathcal{Y}^{\sigma_2}}$ may grow unbounded as $t \rightarrow \infty$. Nevertheless, we can still obtain a uniform-in-time bound on $\|B(t)\|_{\mathcal{Y}^{\sigma_2}}$, provided that
\begin{equation}\label{index_condition}
\sigma_1-\frac{\sigma_2}{2}\le 1,\ \sigma_2> -1,\quad\text{or}\quad \sigma_1<\frac{1}{2},\ \sigma_2=-1.
\end{equation}

\begin{corollary}\label{Corollary 3}\upshape
Under the assumptions in Theorem \ref{Theorem 3} with \eqref{index_condition}, $B\in L^\infty([0,\infty);\my^{\sigma_2})$.
\end{corollary}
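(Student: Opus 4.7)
The plan is to apply Duhamel's formula to \eqref{Hall MHD b}, weight by $|\xi|^{\sigma_2}$ in Fourier, take the supremum in $\xi$, and control the two nonlinear contributions by Cauchy--Schwarz in Fourier (and, for the Hall term, in time) combined with the $L^2$-decay of $(u,B)$ from Theorem \ref{Theorem 3}. Setting $N := \nabla \times (u\times B) - \nabla\times((\nabla\times B)\times B)$ and $\sigma^* := \max\{\sigma_1,\sigma_2\}$, Duhamel gives
\[
|\xi|^{\sigma_2}|\widehat{B}(t,\xi)| \le e^{-|\xi|^2 t}|\xi|^{\sigma_2}|\widehat{B}_0(\xi)| + \int_0^t e^{-|\xi|^2(t-s)}|\xi|^{\sigma_2}|\widehat{N}(s,\xi)|\,ds,
\]
so $\|B(t)\|_{\my^{\sigma_2}} \le \|B_0\|_{\my^{\sigma_2}} + I_1(t) + I_2(t)$, where $I_1,I_2$ are produced by the two nonlinear pieces.

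For the Fourier symbols I use the elementary convolution bounds $|\widehat{u\times B}(\xi)| \le \|u\|_{L^2}\|B\|_{L^2}$ and $|\widehat{(\nabla\times B)\times B}(\xi)| \le \|\nabla B\|_{L^2}\|B\|_{L^2}$, which yield
\[
|\widehat{\nabla\times(u\times B)}(\xi)| \le C|\xi|\|u\|_{L^2}\|B\|_{L^2}, \qquad |\widehat{\nabla\times((\nabla\times B)\times B)}(\xi)| \le C|\xi|\|\nabla B\|_{L^2}\|B\|_{L^2}.
\]

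For $I_1$ I pull $\sup_\xi e^{-|\xi|^2(t-s)}|\xi|^{\sigma_2+1} \le C(t-s)^{-(\sigma_2+1)/2}$ inside the time integral and insert $\|u(s)\|_{L^2}\|B(s)\|_{L^2}\le C_0(1+s)^{-\frac{3}{2}+\sigma^*}$ from Theorem \ref{Theorem 3}; the resulting Beta-type integral $\int_0^t (t-s)^{-(\sigma_2+1)/2}(1+s)^{-\frac{3}{2}+\sigma^*}\,ds$ is uniformly bounded exactly when $\sigma^* \le 1 + \sigma_2/2$ in the case $\sigma_2 > -1$, and (since the weight collapses) when $\sigma^* < 1/2$ in the case $\sigma_2 = -1$ --- which is precisely condition \eqref{index_condition}.

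For $I_2$ the same direct strategy would force a non-integrable singularity at $s=t$ once $\sigma_2 \ge 0$, so I split the exponential symmetrically and apply Cauchy--Schwarz in time:
\[
I_2 \le \Bigl(\sup_\xi |\xi|^{2(\sigma_2+1)}\int_0^t e^{-|\xi|^2(t-s)}\|B(s)\|_{L^2}^2\,ds\Bigr)^{1/2}\Bigl(\int_0^\infty \|\nabla B(s)\|_{L^2}^2\,ds\Bigr)^{1/2}.
\]
The second factor is finite by the energy inequality \eqref{HMHD energy}. For the first, inserting $\|B(s)\|_{L^2}^2 \le C_0(1+s)^{-\frac{3}{2}+\sigma^*}$ and splitting $\xi$ into the low- and high-frequency regions $|\xi|^2 t \le 1$ and $|\xi|^2 t \ge 1$ reduces the supremum to a power $t^{\sigma^* - \sigma_2 - 3/2}$, which stays bounded precisely under \eqref{index_condition} since that condition enforces $\sigma^* - \sigma_2 < 3/2$ in both sub-cases. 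This Hall-term control is the main technical obstacle: the extra derivative would otherwise leave an endpoint singularity at $s=t$ that no simple Fourier H\"older bound can absorb, and it is the $L^2$-in-time integrability of $\|\nabla B\|_{L^2}$ from the energy inequality that makes the Cauchy--Schwarz-in-time trick succeed.
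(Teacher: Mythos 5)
Your proposal is correct, and its treatment of the Hall term is genuinely different from the paper's. For the $\nabla\times(u\times B)$ contribution you do essentially what the paper does: pull out $(t-s)^{-\frac{1+\sigma_2}{2}}$, insert the $L^2$ decay from Theorem \ref{Theorem 3}, and observe that \eqref{index_condition} makes the Beta-type integral (or, when $\sigma_2=-1$, the plain time integral) uniformly bounded, exactly as in \eqref{time bound 2}--\eqref{time bound 3}. For the Hall term, however, the paper keeps the divergence form $\nabla\times\dv(B\otimes B)$, uses $|\widehat{B\otimes B}|\le\|B\|_{L^2}^2$, and bounds $\int_0^t(t-\tau)^{-1-\frac{\sigma_2}{2}}(1+\tau)^{-\frac32+\sigma_1}\,d\tau$ by a Beta function for $\sigma_2<0$ (using only $\sigma_1-\frac{\sigma_2}{2}\le\frac32$, which is automatic), treating $\sigma_2=0$ separately via $\int_0^t|\xi|^2e^{-(t-\tau)|\xi|^2}\,d\tau\le1$; so in the paper it is the $u\times B$ term, not the Hall term, that actually forces \eqref{index_condition}, contrary to your remark that no simple Fourier bound can absorb the endpoint singularity. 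Your alternative --- curl form with symbol $|\xi|^{1+\sigma_2}\|\nabla B\|_{L^2}\|B\|_{L^2}$, Cauchy--Schwarz in time against $\int_0^\infty\|\nabla B(s)\|_{L^2}^2\,ds\le C_0$ from \eqref{HMHD energy}, and a frequency/time splitting of the remaining factor --- is nonetheless valid: writing $\sigma^{*}:=\max\{\sigma_1,\sigma_2\}$, condition \eqref{index_condition} does give $\sigma^{*}-\sigma_2<\frac32$, and a split of the time integral at $t/2$ together with \eqref{heat_inf_est} shows your supremum is uniformly bounded (for small $t$, and when $\sigma^{*}\le\frac12$, the bound is not literally the power $t^{\sigma^{*}-\sigma_2-\frac32}$ you quote, but it is still uniform, so this is only an imprecision of phrasing). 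What your route buys is a single argument for all $\sigma_2\in[-1,0]$ with no separate $\sigma_2=0$ case, exploiting the time-integrated dissipation; what it costs is a condition on the exponents for the Hall term that is slightly stronger than what the paper's direct estimate needs, though still implied by \eqref{index_condition}. Note also that the paper only has to treat Case 1 of Theorem \ref{Theorem 3} here, since Case 2 was already settled in \eqref{B bound in Y}, whereas your $\sigma^{*}$ bookkeeping covers both cases at once.
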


Hence, if \eqref{index_condition} holds, $(L^{2}\cap \mathcal{Y}^{\sigma_{1}})\times (L^{2}\cap \mathcal{Y}^{\sigma_{2}})$ is an invariant space of $(u,B)$ for (\ref{Hall MHD}). A natural question arising from Theorem \ref{Theorem 3} is whether we can estimate $u$ and $B$ separately. To this end, higher-order regularity is necessary; therefore, we consider strong solutions to \eqref{Hall MHD}, whose global-in-time existence is guaranteed for sufficiently small initial data \cite{BKS, Chae-Degond-Liu, DL19, DT22, Tan}. This setting is also addressed in \cite{Chae Schonbek} to derive decay rates for higher-order norms of solutions based on \eqref{Hall MHD Weak decay}.

\begin{theorem}\label{Theorem 4}\upshape
Let $(u_{0}, B_{0})\in (L^{2}\cap \mathcal{Y}^{\sigma_{1}})\times (L^{2}\cap \mathcal{Y}^{\sigma_{2}})$ with $\dv u_{0}=\dv B_{0}=0$, $\sigma_1 \in [-1,1]$ and $\sigma_2 \in [-1,0]$. Additionally suppose that $(u_0,B_0)\in \dot{H}^{\frac{1}{2}}\times(\dot{H}^{\frac{1}{2}}\cap\dot{H}^{\frac{3}{2}})$ and $\|u_0\|_{\dot{H}^{\frac{1}{2}}}+\|B_0\|_{\dot{H}^{\frac{1}{2}}}+\|B_0\|_{\dot{H}^{\frac{3}{2}}}$ is sufficiently small. Then \eqref{Hall MHD} admits a unique solution $(u,B)\in L^\infty((0,\infty);(H^{\frac{1}{2}}\cap\my^{\sigma_1})\times(H^{\frac{3}{2}}\cap\my^{\sigma_2}))$. Moreover, for any $k\in\mathbb{N}\cup \{0\}$ there exists $T_k=T_k(u_0,B_0, k)>0$ and $C_k=C_k(u_0,B_0,k)>0$ such that
\[
\|\nabla^k u(t)\|_{L^2}^2\leq C_k(1+t)^{-\frac{3}{2}+\sigma_1-k},\quad \|\nabla^k B(t)\|_{L^2}^2\leq C_k(1+t)^{-\frac{3}{2}+\sigma_2-k}\quad\text{for all $t>T_k$.}
\]
\end{theorem}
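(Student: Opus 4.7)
The plan is to combine the small-data global strong-solution theory for \eqref{Hall MHD} in the critical space $\dot{H}^{\frac{1}{2}}\times(\dot{H}^{\frac{1}{2}}\cap\dot{H}^{\frac{3}{2}})$, available through \cite{BKS, Chae-Degond-Liu, DL19, DT22, Tan}, with the Fourier-splitting machinery underlying Theorem \ref{Theorem 3}, now applied to the $u$- and $B$-energy identities individually. The point is that the combined splitting of Theorem \ref{Theorem 3} only delivers the symmetric exponent $\max(\sigma_1,\sigma_2)$, whereas a separate splitting combined with the two $\my^{\sigma_i}$-bounds yields the sharper pair $\|u(t)\|_{L^2}^2 \ls (1+t)^{-\frac32+\sigma_1}$ and $\|B(t)\|_{L^2}^2 \ls (1+t)^{-\frac32+\sigma_2}$. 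For the existence part, the usual $\dot{H}^{\frac{1}{2}}\times\dot{H}^{\frac{3}{2}}$ energy estimates close a global bootstrap under the smallness assumption and produce a unique $(u,B)\in L^\infty((0,\infty);H^{\frac{1}{2}}\times H^{\frac{3}{2}})$. The propagation $u\in L^\infty_t \my^{\sigma_1}$ is supplied by Theorem \ref{Theorem 3}. Since \eqref{index_condition} need not hold, Corollary \ref{Corollary 3} may fail, and the uniform-in-time $\my^{\sigma_2}$-bound on $B$ is instead obtained directly from the Duhamel formula for $B$: bounding the bilinear Hall and convection nonlinearities pointwise in $|\xi|^{\sigma_2}|\widehat{\,\cdot\,}|$-norm, one absorbs them using the small $\dot{H}^{\frac{3}{2}}$-norm of $B$ together with the corresponding Sobolev-type control on $B$ and $\nabla B$.

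The separate $L^2$-decays then follow from Fourier splitting at the level of each energy identity. For $B$, the Hall contribution $\int\nabla\times((\nabla\times B)\times B)\cdot B\,dx$ vanishes by the orthogonality $(a\times b)\cdot a=0$, and the coupling $\int(u\times B)\cdot(\nabla\times B)\,dx$ is absorbed into $\frac12\|\nabla B\|_{L^2}^2$ via $\|u\|_{L^3}\|B\|_{L^6}\|\nabla B\|_{L^2}\ls\|u\|_{\dot{H}^{\frac{1}{2}}}\|\nabla B\|_{L^2}^2$ and the smallness of $\|u\|_{\dot{H}^{\frac{1}{2}}}$. Choosing $S(t)=\lt\{|\xi|^2\le \alpha/(1+t)\rt\}$ and inserting $|\widehat B(\xi)|\le C|\xi|^{-\sigma_2}$ yields $\int_{S(t)}|\widehat B|^2\,d\xi\ls(1+t)^{-\frac32+\sigma_2}$, whereupon multiplying the resulting differential inequality by $(1+t)^{2\alpha}$ and integrating gives $\|B(t)\|_{L^2}^2\le C_0(1+t)^{-\frac32+\sigma_2}$. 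The analogous scheme for $u$ uses the divergence-free rewriting $\int((\nabla\times B)\times B)\cdot u\,dx=-\int B_i\,\pa_i u_j\,B_j\,dx$ (from $\dv u=\dv B=0$) together with $\|B\|_{L^4}^2\ls\|B\|_{\dot{H}^{\frac{3}{4}}}^2$ and the smallness of $B$ to absorb the magnetic coupling into $\|\nabla u\|_{L^2}^2$, producing $\|u(t)\|_{L^2}^2\le C_0(1+t)^{-\frac32+\sigma_1}$. For $k\ge 1$, the higher-order decays are obtained by running Fourier splitting on $\|\nabla^k u\|_{L^2}^2$ and $\|\nabla^k B\|_{L^2}^2$ after a finite waiting time $T_k$ needed for parabolic smoothing to generate $H^k$-regularity, and closing the nonlinear terms by interpolating between the uniform higher Sobolev norms and the $L^2$-decays just established, in the spirit of \cite{Schonbek Wiegner} and Corollary \ref{Corollary 2}.

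The principal obstacle is the global propagation of $\|B(t)\|_{\my^{\sigma_2}}$ in the absence of \eqref{index_condition}: the Hall term $\nabla\times((\nabla\times B)\times B)$ carries two spatial derivatives, so the pointwise-in-$\xi$ estimate cannot be closed from $L^2$-based information alone as in the proof of Corollary \ref{Corollary 3}, and the small $\dot{H}^{\frac{3}{2}}$-norm of $B$ (with the resulting $L^p$/$L^\infty$-control of $B$ and $\nabla B$) is genuinely needed. A secondary difficulty is separating the $u$- and $B$-decays with sharp individual exponents: the magnetic coupling in the $u$-equation forces us to use the divergence-free algebraic rewriting together with the $\dot{H}^{\frac{3}{2}}$-smallness of $B$, rather than the mere $L^2$-cancellation that suffices for Theorem \ref{Theorem 3}.
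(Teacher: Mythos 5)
Your overall architecture (small-data critical theory for existence, $u\in L^\infty_t\my^{\sigma_1}$ from Theorem \ref{Theorem 3}, separate Fourier splitting on the $u$- and $B$-energy identities, eventual-regularity treatment of $k\ge1$) matches the paper, but the step you single out as the ``principal obstacle'' --- the uniform-in-time bound $B\in L^\infty([0,\infty);\my^{\sigma_2})$ when \eqref{index_condition} fails --- is exactly where your proposal has a genuine gap. You claim to close the pointwise-in-$\xi$ Duhamel estimate by ``absorbing'' the Hall and convection bilinear terms using the smallness of $\|B\|_{\dot H^{3/2}}$ and Sobolev control of $B,\nabla B$. Smallness cannot repair this: the failure in Corollary \ref{Corollary 3} without \eqref{index_condition} is a low-frequency/large-time divergence of the time integrals $\int_0^t|\xi|^{1+\sigma_2}e^{-(t-\tau)|\xi|^2}\|u\|_{L^2}\|B\|_{L^2}\,d\tau$ and $\int_0^t|\xi|^{2+\sigma_2}e^{-(t-\tau)|\xi|^2}\|B\|_{L^2}^2\,d\tau$ (see the Remark after Corollary \ref{Corollary 3}), and these are not proportional to $\sup_\tau\|B(\tau)\|_{\my^{\sigma_2}}$ with a small prefactor. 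An absorption of that type would require smallness of time-integrated Lei--Lin-type norms of $(u,B)$ (the mechanism of Theorem \ref{Theorem 5}), which is not available from $\dot H^{1/2}\cap\dot H^{3/2}$-smallness: e.g.\ using $\nabla B\in L^2_tL^2_x$ in the Hall term only produces a factor $|\xi|^{\sigma_2}$, unbounded as $\xi\to0$ for $\sigma_2<0$. Your diagnostic is also reversed: the paper \emph{does} close the bound with purely $L^2$-based information, but only after a decay bootstrap. Concretely, when $\sigma_1>\sigma_2$ it first proves the separate decay $\|B(t)\|_{L^2}^2\le C_0(1+t)^{-3/2}$ by testing \eqref{Hall MHD b} with $B$ (the Hall term vanishes, the coupling is absorbed since $\|u(t)\|_{\dot H^{1/2}}\to0$ by \eqref{critical_norm_decay}) and applying Lemma \ref{main lemma 1} with the bound $\sup_{|\xi|\le1}|\widehat B|\le C_0$ from Theorem \ref{Theorem 3}; only then is this faster decay inserted into the Duhamel formula to obtain $B\in L^\infty_t\my^{\sigma_2}$ (with an intermediate $\my^{-1/2}$ step and the rate $(1+t)^{-2}$ when $\sigma_2=-1$), after which Lemma \ref{main lemma 1} upgrades the decay to $(1+t)^{-3/2+\sigma_2}$. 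As written, your ordering (first $\my^{\sigma_2}$, then decay) is also circular once the absorption step is removed, since your Fourier splitting for $B$ inserts $|\widehat B|\le C|\xi|^{-\sigma_2}$.

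Two further, more minor points. For the $u$-decay when $\sigma_1<\sigma_2$, the magnetic coupling cannot literally be ``absorbed'' into $\|\nabla u\|_{L^2}^2$ since it is linear in $\|\nabla u\|_{L^2}$; after Young's inequality it leaves a forcing term (the paper uses $C\|B\|_{L^2}\|\nabla B\|_{L^2}\|\nabla^{k+1}B\|_{L^2}^2\le C_k(1+t)^{-9/2+2\sigma_2-k}$) whose decay must be verified to be faster than the target rate before running the modified splitting; the same bookkeeping (and the condition $3\sigma_1-\sigma_2\le4$) appears in the $\nabla^kB$ estimate. For $k\ge1$ you should also make explicit the commutator structure of the Hall term, i.e.\ that $\int\nabla^k(\nabla\times B)\times B\cdot\nabla^k\nabla\times B=0$, so that only $\nabla^k((\nabla\times B)\times B)-\nabla^k(\nabla\times B)\times B$ must be estimated via \eqref{fractional Leibniz}; without this the top-order Hall contribution cannot be controlled by $\|\nabla B\|_{L^3}\|\nabla^{k+1}B\|_{L^2}^2$.
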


\begin{remark}\upshape
The additional condition on the initial data $(u_0,B_0)$ in the critical Sobolev space $\dot{H}^{\frac{1}{2}}\times(\dot{H}^{\frac{1}{2}}\cap\dot{H}^{\frac{3}{2}})$ is required only for the global existence of strong solutions, as established in \cite{DT22, Tan}. In fact, the eventual regularity of weak solutions to \eqref{Hall MHD} suffices to derive the desired decay rates in Theorem \ref{Theorem 4}. However, unlike the case of weak solutions to \eqref{NSE}, this property remains an open problem for weak solutions to \eqref{Hall MHD}. This issue will be addressed in a forthcoming work \cite{JS-arXiv} by the second and third authors.
\end{remark}

We also deal with (\ref{Hall MHD}) in Lei-Lin spaces $\mx^\sigma$. The global well-posedness of (\ref{Hall MHD})  is  established when $(u_{0}, B_{0})$ is sufficiently small in $\mathcal{X}^{-1}\cap \mathcal{X}^{0}$ \cite{Kwak} and $(u_{0}, B_{0}, \nabla \times B_{0})$ is  sufficiently small in $\mathcal{X}^{-1}$ \cite{Liu}. We here establish the global well-posendess and derive the decay rate of $(u,B)$ in Lei-Lin spaces using $\my^\sigma$.

\begin{theorem}\label{Theorem 5} \upshape
Let $k\geq 0$ and $\sigma \in [-1,1]$. Let $u_0 \in \mx^{k-1}\cap \mx^{-1}\cap \my^\sigma$ and $B_0 \in \mx^{k-1}\cap \mx^{-1}\cap\mx^0 \cap \my^\sigma$ with $\dv u_0=\dv B_0=0$. There exists $\epsilon>0$ such that if $\|u_0\|_{\mx^{-1}} + \|B_0\|_{\mx^{-1}}+ \|B_0\|_{\mx^0}\leq \epsilon$, then (\ref{Hall MHD}) admits a unique  solution 
\[
\begin{split}
&u \in L^{\infty}([0,\infty); \mx^{k-1} \cap \mx^{-1}\cap \my^{\sigma}) \cap L^1((0,\infty); \mx^{k+1} \cap \mx^1), \\
& B \in L^{\infty}([0,\infty); \mx^{k-1} \cap \mx^{-1}\cap \mx^0 \cap \my^\sigma) \cap L^1((0,\infty); \mx^{k+1}\cap \mx^1\cap \mx^2).
\end{split}
\]
Furthermore, 
\[
\begin{split}
&\|u(t)\|_{\mx^{k-1}} + \|B(t)\|_{\mx^{k-1}}\leq C_k(1+t)^{-1+\frac{\sigma}{2}-\frac{k}{2}} \quad \text{for all $t>0$.}
\end{split}
\]
\end{theorem}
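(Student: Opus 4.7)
The plan is to follow the template of Theorem \ref{Theorem 2}, viewing \eqref{Hall MHD} as a perturbation of \eqref{NSE} by two magnetic nonlinearities, with the Hall term $\nabla\times((\nabla\times B)\times B)$ being the only genuinely new difficulty. Because this term carries two derivatives on $B$, one cannot close an $\mx^{-1}$ estimate in isolation; an auxiliary $\mx^{0}$ estimate on $B$ is needed, which is precisely why the smallness hypothesis involves $\|B_0\|_{\mx^0}$.

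Fourier-transforming \eqref{Hall MHD}, projecting out the pressure, and multiplying by $|\xi|^{s}$ produces differential inequalities of the form
\begin{align*}
\frac{d}{dt}\|u\|_{\mx^{s}}+\|u\|_{\mx^{s+2}}&\lesssim \|u\otimes u\|_{\mx^{s+1}}+\|B\otimes B\|_{\mx^{s+1}},\\
\frac{d}{dt}\|B\|_{\mx^{s}}+\|B\|_{\mx^{s+2}}&\lesssim \|u\otimes B\|_{\mx^{s+1}}+\|(\nabla\times B)\otimes B\|_{\mx^{s+1}}.
\end{align*}
Using the algebra bound $\|fg\|_{\mx^{s}}\lesssim\|f\|_{\mx^{s}}\|g\|_{\mx^{0}}+\|f\|_{\mx^{0}}\|g\|_{\mx^{s}}$ (from $|\xi|^{s}\leq C(|\xi-\eta|^{s}+|\eta|^{s})$) together with the interpolation $\|f\|_{\mx^{0}}^{2}\leq\|f\|_{\mx^{-1}}\|f\|_{\mx^{1}}$, I would close at $s=-1$ (for both $u$ and $B$) and $s=0$ (for $B$) a coupled system of three differential inequalities; the key Hall contribution in the $\mx^{0}$-equation is bounded by $\|B\|_{\mx^0}\|B\|_{\mx^2}$, whose small $\mx^{0}$-factor absorbs it into the left-hand dissipation. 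The smallness hypothesis $\|u_0\|_{\mx^{-1}}+\|B_0\|_{\mx^{-1}}+\|B_0\|_{\mx^0}\leq\epsilon$ then yields global existence in the stated norms with $L^{1}_{t}$-integrability of $\|u\|_{\mx^{1}},\|B\|_{\mx^{1}},\|B\|_{\mx^{2}}$. Reapplying the same inequalities at level $s=k-1$ propagates $\mx^{k-1}$-regularity, and propagation of the $\my^{\sigma}$-bound follows from the pointwise Fourier-space Duhamel identity combined with this $L^{1}_{t}$-control.

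For the decay, I would adapt Schonbek's Fourier-splitting method directly to the $\mx^{k-1}$-norm. Setting $S(t)=\{|\xi|\leq g(t)\}$ with $g(t)^{2}=\alpha/(1+t)$ for a large parameter $\alpha=\alpha(k)$, the dissipation obeys
\[
\|u\|_{\mx^{k+1}}\;\geq\;g(t)^{2}\Big(\|u\|_{\mx^{k-1}}-\int_{S(t)}|\xi|^{k-1}|\widehat u(\xi)|\,d\xi\Big),
\]
and the uniform $\my^{\sigma}$-bound $|\widehat u(\xi)|\leq C_{0}|\xi|^{-\sigma}$ dominates the low-frequency remainder by $C_{0}g(t)^{k+2-\sigma}$. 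Substituting back into the $s=k-1$ differential inequality and absorbing the nonlinear terms via the uniform smallness (including, for $B$, the Hall contribution using $\|B\|_{\mx^0}\leq\epsilon$), multiplication by $(1+t)^{\alpha/2}$ and time integration give
\[
\|u(t)\|_{\mx^{k-1}}+\|B(t)\|_{\mx^{k-1}}\;\leq\;C(1+t)^{-\alpha/2}+C(1+t)^{-1+\sigma/2-k/2},
\]
and $\alpha\geq k+2-\sigma$ recovers the claimed rate.

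The principal obstacles are twofold. First, every occurrence of two derivatives on $B$ produced by the Hall term must be paired with a uniformly small $\mx^{0}$-factor of $B$; this matching is what underlies both global existence and the closing of the decay inequality for $B$. Second, when running the Fourier splitting at level $\mx^{k-1}$ one has to control the nonlinear right-hand side by interpolating against the persistent $\my^{\sigma}$-bound and the decay already established at lower orders, and this is what keeps the exponent $-1+\sigma/2-k/2$ sharp across all $k\geq 0$. Beyond these, no technology outside what is developed for Theorems \ref{Theorem 2} and \ref{Theorem 3} should be required.
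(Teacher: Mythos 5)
Your proposal is correct and follows essentially the same route as the paper: propagate the $\my^\sigma$ bound through the Fourier-space Duhamel formula using the $L^1_t$ control of the $\mx^1$/$\mx^2$ norms (the paper simply cites \cite{Liu} for the $k=0$ existence rather than re-deriving the coupled $\mx^{-1}$/$\mx^0$ estimates), close the $\mx^{k-1}$ differential inequality by pairing the Hall term with the small $\mx^0$ factor of $B$, and then run Fourier splitting at the $\mx^{k-1}$ level with the low-frequency set controlled by the $\my^\sigma$ bound, which is exactly the paper's Lemma \ref{main lemma 2}. One bookkeeping remark: with $g(t)^2=\alpha/(1+t)$ the damping term is absorbed by multiplying by $(1+t)^{\theta\alpha}$ (not $(1+t)^{\alpha/2}$), and the condition is $\theta\alpha>1+\frac{k}{2}-\frac{\sigma}{2}$; this does not affect the outcome.
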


\section{ Preliminaries} \label{sec:Prelim}
All constants will be denoted by $C$, and we follow the convention that such constants may vary from line to line, or even between two occurrences within the same expression. We also use a simplified notation for integrals over the spatial variables:
\[
\int f=:\int_{\mathbb{R}^{3}}f(x)dx.
\]

We begin with 2 inequalities:  
\begin{enumerate}[]
\item (1) For all $x>0$ and $p>0$,
\begin{equation}\label{heat_inf_est}
|x^p e^{-ax^2}| \le  |x^p e^{-ax^2}| \bigg|_{x = \sqrt{\frac{p}{2a}}} = \lt(\frac{p}{2a} \rt)^{\frac p2} e^{-\frac p2}.
\end{equation}
\item (2) For $0<\alpha,\beta<1$ with $\alpha+\beta=1$
\[
\int^{t}_{0}(t-\tau)^{-\alpha}\tau^{-\beta}d\tau=\int_0^1 (1-\theta)^{-\alpha}\theta^{-\beta}\,d\theta = \mathcal{B}\lt(\alpha,\beta\rt),
\]
where $\mathcal{B}(\cdot, \cdot)$ is the beta function.
\end{enumerate}

We give the following Sobolev inequalities in 3D:
\[
\left\|f\right\|_{L^{3}}\leq C\|f\|^{\frac{1}{2}}_{L^2}\|\nabla f\|^{\frac{1}{2}}_{L^2}, \quad \left\|f\right\|_{L^{6}}\leq C\|\nabla f\|_{L^2},
\]
and a product estimate {which is called} Leibniz rule \cite{Christ}, \cite[Page 105]{Taylor}: for $1<p<\infty$ and $1<p_{1},p_{2},p_{3},p_{4}\leq\infty$
\eqn \label{fractional Leibniz}
\left\|\nabla^{k}(fg)\right\|_{L^{p}}\leq C \left\|\nabla^{k} f\right\|_{L^{p_{1}}}\left\|g\right\|_{L^{p_{2}}}+C\left\|f\right\|_{L^{p_{3}}}\left\|\nabla^{k}g\right\|_{L^{p_{4}}}, \quad \frac{1}{p_{1}}+\frac{1}{p_{2}}=\frac{1}{p_{3}}+\frac{1}{p_{4}}=\frac{1}{p}
\een

We finally present 3 interpolation results in $\mx^\sigma$:
\begin{subequations}
\begin{align}
\sigma\in (-1,1): &  \ \|f\|_{\mx^{-\sigma}} \le C\|f\|_{\mx^{-1}}^{\frac{1+\sigma}{2}} \|f\|_{\mx^1}^{\frac{1-\sigma}{2}}, \label{intp_x 1}\\
\sigma\in (-1,0): & \ \|f\|_{\mx^{-\sigma}} \le C\|f\|_{\mx^0}^{\frac{2+\sigma}{2}}\|f\|_{\mx^2}^{\frac{-\sigma}{2}}, \label{intp_x 2}\\
\sigma\in [0,k): & \ \|f\|_{\mx^{\sigma}} \le C\|f\|_{\mx^{-1}}^{\frac{k-\sigma}{k+1}}\|f\|_{\mx^k}^{\frac{\sigma+1}{k+1}}. \label{intp_x 3}
\end{align}
\end{subequations}

\subsection{Main Lemmas}
We now provide two lemmas being central to the proofs of our decay rate results.

\begin{lemma}\label{main lemma 1}\upshape
Let $f$ be a smooth function satisfying 
\[
\frac{d}{dt}\|f(t)\|_{L^{2}}^2 + \|\nabla f(t)\|_{L^{2}}^2 \le 0
\]
for all $t>0$. Suppose there exists a positive constant $C_{\ast}>0$ and $\sigma < \frac 32$ such that
\[
\sup_{0\leq t<\infty}\sup_{|\xi|\leq 1}|\xi|^\sigma \left|\widehat f(t,\xi)\right|  \le C_{\ast}.
\]
Then, 
\[
\|f(t)\|_{L^{2}}^2  \le C(1+t)^{-\frac{3}{2}+\sigma}\quad\text{for all $t\geq N-1$},
\]
where $N-1$ is a non-negative constant with $N>\frac{3}{2}-\sigma$.
\end{lemma}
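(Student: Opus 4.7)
The plan is to adapt Schonbek's Fourier splitting argument, replacing the usual $L^1$-based control on low frequencies with the pointwise bound supplied by the hypothesis. Set $g(t)^2 := N/(1+t)$ and split the frequency integral in $\|\nabla f\|_{L^2}^2 = \int |\xi|^2 |\widehat f|^2\,d\xi$ into $\{|\xi| \le g(t)\}$ and its complement. Bounding $|\xi|^2 \ge g(t)^2$ on the outer region and using Plancherel gives
\[
\|\nabla f(t)\|_{L^2}^2 \;\ge\; g(t)^2 \|f(t)\|_{L^2}^2 \;-\; g(t)^2 \int_{|\xi|\le g(t)} |\widehat f(t,\xi)|^2\,d\xi.
\]

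The point of the restriction $t \ge N-1$ is precisely that it forces $g(t) \le 1$, bringing the inner integral into the range where the hypothesis $|\xi|^\sigma|\widehat f(t,\xi)| \le C_*$ is available. Using this bound pointwise and switching to polar coordinates,
\[
\int_{|\xi|\le g(t)} |\widehat f(t,\xi)|^2\,d\xi \;\le\; C_*^2 \int_{|\xi|\le g(t)} |\xi|^{-2\sigma}\,d\xi \;\le\; C\, g(t)^{3-2\sigma},
\]
where the integral converges thanks to the hypothesis $\sigma < 3/2$ (this is the reason $\sigma$ is capped at $3/2$ and is the only delicate step). Inserting these two estimates into the differential energy inequality yields, for $t \ge N-1$,
\[
\frac{d}{dt}\|f(t)\|_{L^2}^2 + \frac{N}{1+t}\|f(t)\|_{L^2}^2 \;\le\; C\, \frac{N^{(5-2\sigma)/2}}{(1+t)^{(5-2\sigma)/2}}.
\]

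From here I would close by the integrating factor method: multiplying through by $(1+t)^N$ converts the left-hand side into $\frac{d}{dt}\big[(1+t)^N \|f(t)\|_{L^2}^2\big]$, and the right-hand side becomes $C(1+t)^{N-(5-2\sigma)/2}$. Integrating from $N-1$ to $t$ and using that $N > 3/2 - \sigma$ to guarantee the exponent $N-(5-2\sigma)/2 > -1$ produces a primitive of order $(1+t)^{N-(3-2\sigma)/2}$. Dividing by $(1+t)^N$ and absorbing $\|f(N-1)\|_{L^2}^2 \le \|f(0)\|_{L^2}^2$ (from the given monotonicity) into the constant finishes the bound
\[
\|f(t)\|_{L^2}^2 \;\le\; C(1+t)^{-3/2+\sigma}.
\]
The main obstacle is not analytical but conceptual: one must recognize that $\sigma < 3/2$ is exactly the integrability threshold for the pseudo-measure bound to survive the Plancherel squaring on the ball of radius $g(t)$, and that the apparently artificial cutoff $t \ge N-1$ is the mechanism by which the pointwise hypothesis (valid only for $|\xi|\le 1$) becomes applicable to the shrinking ball $|\xi|\le g(t)$.
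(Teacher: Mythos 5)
Your proposal is correct and follows essentially the same route as the paper's proof: the Fourier splitting at radius $g(t)^2=N/(1+t)$, the observation that $t\ge N-1$ places the low-frequency ball inside $\{|\xi|\le 1\}$ where the pseudo-measure hypothesis applies, the bound $\int_{|\xi|\le g(t)}|\widehat f|^2\,d\xi\le C g(t)^{3-2\sigma}$ requiring $\sigma<\tfrac32$, the integrating factor $(1+t)^N$, and the final integration using $N>\tfrac32-\sigma$. No gaps; the argument matches the paper's step for step.
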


\begin{proof}
By using the Plancherel's theorem, 
\[
\begin{aligned}
\frac{d}{dt}\|f(t)\|_{L^{2}}^2 & \le - \int |\xi|^{2} \left|\widehat f(t,\xi)\right|^2  \,d\xi\le -\int_{\{|\xi|^2 > \frac{N}{1+t} \}} |\xi|^{2} \left|\widehat f(t,\xi)\right|^2 \,d\xi\le -\frac{N}{1+t} \int_{\{|\xi|^2 > \frac{N}{1+t}\}} \left|\widehat f(t,\xi)\right|^2 \,d\xi\\
&= -\frac{N}{1+t}\|f(t)\|_{L^{2}}^2 + \frac{N}{1+t}\int_{\{|\xi|^2 \le \frac{N}{1+t}\}} \left|\widehat f(t,\xi)\right|^2  \,d\xi.
\end{aligned}
\]
Hence, we attain
\[
\begin{aligned}
\frac{d}{dt}\|f(t)\|_{L^{2}}^2 +\frac{N}{1+t}\| f(t)\|_{L^{2}}^2 \leq   \frac{N}{1+t}\int_{\{|\xi|^2 \le \frac{N}{1+t}\}}\left|\widehat f(t,\xi)\right|^2  \,d\xi,
\end{aligned}
\]
which implies
\eqn \label{eq:2.6}
\frac{d}{dt}\left[(1+t)^N\|f(t)\|_{L^{2}}^2  \right] \leq N(1+t)^{N-1}\int_{\{|\xi|^{2} \le \frac{N}{1+t}\}} \left|\widehat f(t,\xi)\right|^2  \,d\xi.
\een
Since $\left\{\xi: |\xi|^2 \le N/(1+t)\right\} \subseteq \left\{\xi: |\xi|\le 1\right\}$ for all $t\geq N-1$, we use our assumption to bound the right-hand side of (\ref{eq:2.6}) as follows:
\[
\begin{aligned}
\frac{d}{dt}\left[(1+t)^N\|f(t)\|_{L^{2}}^2  \right] &\le NC_\ast (1+t)^{N-1} \int_{\{|\xi|^2 \le \frac{N}{1+t}\}} |\xi|^{-2\sigma} \,d\xi\\
&\le C(1+t)^{N-1}\int_0^{\sqrt{N/(1+t)}} r^{2-2\sigma}\,dr\le C(1+t)^{N -\frac{5}{2}+\sigma }
\end{aligned}
\]
when $\sigma < \frac 32$. Finally, we integrate the inequality with respect to time and obtain
\[
\begin{split}
\|f(t)\|_{L^{2}}^2 &\leq \frac{\|f(N-1)\|_{L^{2}}^2 }{(1+t)^{N}}+\frac{C}{N-\frac{3}{2}+\sigma} \frac{(1+t)^{N-\frac{3}{2}+\sigma}-N^{N-\frac{3}{2}+\sigma}}{(1+t)^{N}} \leq C_{0}(1+t)^{-\frac{3}{2}+\sigma}
\end{split}
\]
for $t\geq N-1$, where we use $N>\frac{3}{2}-\sigma$. This complete the proof of Lemma \ref{main lemma 1}.
\end{proof}

\begin{lemma}\label{main lemma 2}\upshape
Let $k \ge 0$ and $\theta>0$. Let $f$ be a smooth function satisfying the following inequality
\eqn \label{X inequality}
\frac{d}{dt}\|f(t)\|_{\mx^{k-1}}  + \theta \|f(t)\|_{\mx^{k+1}} \le 0
\een
for all $t>0$. Suppose there exists a positive constant $C_\ast>0$  and {$\sigma < k+2$} such that 
\eqn \label{X inequality 2}
\sup_{0\leq t<\infty} \sup_{|\xi\leq 1} |\xi|^\sigma \left|\widehat f(t,\xi)\right|\le C_\ast.
\een
Then, 
\[
\|f(t)\|_{\mx^{k-1}} \le C_{0}(1+t)^{-1 + \frac{\sigma}{2}-\frac{k}{2}}\quad\text{for all $t\geq N/\theta-1$},
\]
where $N/\theta -1$ is a non-negative constant with $N>1+\frac{k}{2}-\frac{\sigma}{2}$.
\end{lemma}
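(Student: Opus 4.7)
The plan is to mirror the Fourier--splitting argument of Lemma \ref{main lemma 1} line by line, replacing the $L^2$ energy with the $\mx^{k-1}$ norm and using $|\xi|^2$ as the multiplier that converts two derivatives of dissipation into a frequency weight. The key algebraic identity is the pointwise bound $|\xi|^{k+1}|\widehat f(\xi)| = |\xi|^2\cdot|\xi|^{k-1}|\widehat f(\xi)|$, so on the high-frequency set $A(t):=\{|\xi|^2 > N/(\theta(1+t))\}$ we immediately get $|\xi|^{k+1}|\widehat f| > \tfrac{N}{\theta(1+t)}|\xi|^{k-1}|\widehat f|$. Integrating over $A(t)$ and using its complement to split $\|f\|_{\mx^{k-1}}$, this gives
\[
\theta\|f(t)\|_{\mx^{k+1}} \;\ge\; \frac{N}{1+t}\|f(t)\|_{\mx^{k-1}} - \frac{N}{1+t}\int_{\{|\xi|^2\le N/(\theta(1+t))\}} |\xi|^{k-1}|\widehat f(t,\xi)|\,d\xi.
\]

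Inserting this into \eqref{X inequality} and multiplying by the integrating factor $(1+t)^N$ produces, just as in \eqref{eq:2.6},
\[
\frac{d}{dt}\bigl[(1+t)^N\|f(t)\|_{\mx^{k-1}}\bigr] \;\le\; N(1+t)^{N-1}\int_{\{|\xi|^2\le N/(\theta(1+t))\}} |\xi|^{k-1}|\widehat f(t,\xi)|\,d\xi.
\]
For $t\ge N/\theta-1$ the ball of integration lies in $\{|\xi|\le 1\}$, so hypothesis \eqref{X inequality 2} gives the pointwise bound $|\widehat f(t,\xi)|\le C_\ast|\xi|^{-\sigma}$ on that set. Passing to spherical coordinates in $\R^3$, I would then estimate
\[
\int_{\{|\xi|^2\le N/(\theta(1+t))\}} |\xi|^{k-1-\sigma}\,d\xi \;\le\; C\int_0^{\sqrt{N/(\theta(1+t))}} r^{k+1-\sigma}\,dr \;\le\; C(1+t)^{-\frac{k+2-\sigma}{2}},
\]
where the radial integral converges \emph{precisely} because $\sigma<k+2$. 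Combined, the right-hand side of the differential inequality is bounded by $C(1+t)^{N-2-\frac{k}{2}+\frac{\sigma}{2}}$.

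The last step is a time integration from $N/\theta-1$ to $t$: since the exponent $N-2-\frac{k}{2}+\frac{\sigma}{2}>-1$ exactly under the standing assumption $N>1+\frac{k}{2}-\frac{\sigma}{2}$, the antiderivative grows like $(1+t)^{N-1-\frac{k}{2}+\frac{\sigma}{2}}$. Dividing by $(1+t)^N$ then absorbs the boundary contribution at $t=N/\theta-1$ (which decays like $(1+t)^{-N}$, a faster rate) into $C_0(1+t)^{-1+\frac{\sigma}{2}-\frac{k}{2}}$. I do not expect any genuine obstacle here; the proof is structurally identical to Lemma \ref{main lemma 1}, and the only subtle point is verifying that the integrability threshold of the low-frequency radial integral matches the hypothesis $\sigma<k+2$, which falls out of dimension three and the shift from $\mx^{-1}$ to $\mx^{k-1}$ on the left-hand side.
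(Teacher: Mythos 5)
Your proposal is correct and is essentially identical to the paper's own proof: the same splitting set $\{\theta|\xi|^2 > N/(1+t)\}$, the same integrating factor $(1+t)^N$, the same radial integral bounded using $\sigma < k+2$, and the same final time integration under $N > 1+\frac{k}{2}-\frac{\sigma}{2}$. No gaps.
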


\begin{proof}
From (\ref{X inequality}), 
\[
\frac{d}{dt}\|f(t)\|_{\mx^{k-1}}  \le -\theta \int_{\{ \theta|\xi|^2 > \frac{N}{1+t}\}} |\xi|^{k+1} \left|\widehat f(t,\xi)\right| \,d\xi \leq -\frac{N}{1+t}\|f(t)\|_{\mx^{k-1}} + \frac{N}{1+t}\int_{\{ \theta|\xi|^2 \le \frac{N}{1+t} \}} |\xi|^{k-1} \left|\widehat f(t,\xi)\right|\,d\xi.
\]
Since $\left\{\xi: \theta |\xi|^2 \le N/(1+t)\right\} \subseteq \left\{\xi: |\xi| \le 1\right\}$ for $t\ge N/\theta-1$, we obtain
\[
\begin{aligned}
\frac{d}{dt}\lt[ (1+t)^N \|f(t)\|_{\mx^{k-1}} \rt] &\le NC_\ast (1+t)^{N-1}\int_{\{\theta|\xi|^2 \le \frac{N}{1+t} \}} |\xi|^{k-\sigma-1}\,d\xi\\
&\le C(1+t)^{N-1}\int_{0}^{\sqrt{N/(\theta(1+t))}} r^{k-\sigma+1} \,dr \le C(1+t)^{N-2+\frac{\sigma}{2}-\frac{k}{2}}
\end{aligned}
\]
from this we deduce that   
\[
\begin{split}
\|f(t)\|_{\mx^{k-1}} &\leq \frac{\|f(N/\theta-1)\|_{\mx^{k-1}} }{(1+t)^{N}}+\frac{C}{N-1+\frac{\sigma}{2}-\frac{k}{2}} \frac{(1+t)^{N-1+\frac{\sigma}{2}-\frac{k}{2}}-(N/\theta)^{N-1+\frac{\sigma}{2}-\frac{k}{2}}}{(1+t)^{N}}\leq C_{0}(1+t)^{-1+\frac{\sigma}{2}-\frac{k}{2}}
\end{split}
\]
for $t\geq N/\theta-1$, where we use $N>1+\frac{k}{2}-\frac{\sigma}{2}$. This completes the proof of Lemma \ref{main lemma 2}.
\end{proof}

\section{Incompressible Naiver-Stokes equations}  \label{sec:NS}
In this section, we prove the decay rates results for (\ref{NSE}). Let 
\eqn \label{NSE Integral}
u(t)=e^{t\Delta}u_{0}-\int^{t}_{0}e^{(t-\tau)\Delta}\mathbb{P}\left(\dv (u\otimes u)\right)(\tau)\,d\tau,
\een
where $\mathbb{P}$ is the Leray projection operator with its matrix valued Fourier multiplier $m(\xi)$: 
\eqn \label{Leray projection}
m_{ij}(\xi)=\delta_{ij}-\xi_{i}\xi_{j}/|\xi|^{2}.
\een
By taking the Fourier transform to (\ref{NSE Integral}), we have
\eqn\label{Fourier NS Integral}
\widehat{u}(t,\xi)=e^{-t|\xi|^{2}}\widehat{u}_{0}(\xi)-\int^{t}_{0}e^{-(t-\tau)|\xi|^{2}} m(\xi)i\xi\cdot \left(\widehat{u\otimes u} \right)(\tau,\xi)d\tau.
\een
Since $|m_{ij}(\xi)|\leq 1$, we proceed to bound $u$ as if $m$ is absent. We also bound $\|\widehat{f g}\|_{L_\xi^\infty}$  by Young's inequality and the Plancherel's theorem: $\|\widehat{f g}\|_{L_\xi^\infty} \leq \|f\|_{L^{2}} \|g\|_{L^{2}}$.

\subsection{Proof of Theorem \ref{Theorem 1}} \label{sec:NSE}
Since we already have (\ref{NSE energy}), we only need to show $u\in L^\infty([0,\infty);\my^{\sigma})$. Then, Lemma \ref{main lemma 1} gives the desired decay rate in Theorem \ref{Theorem 1}. To prove $u\in L^\infty([0,\infty);\my^{\sigma})$, we divide  the range of $\sigma$ into  3 cases.

\vspace{1ex}

\noindent
$\blacktriangleright$ ({\bf Case 1}: $\sigma=1$): By multiplying (\ref{Fourier NS Integral}) by $|\xi|$,
\[
\begin{aligned}
|\xi| |\widehat u(t,\xi)| &\leq |\xi|e^{-t |\xi|^2} |\widehat u_0(\xi)| + \int_0^t |\xi|^2 e^{-(t-\tau)|\xi|^2} \|\widehat{u \otimes u}(\tau)\|_{L_\xi^\infty}\,d\tau\\
&\leq \|u_0\|_{\my^1} + \int_0^t |\xi|^2 e^{-(t-\tau)|\xi|^2}\|u(\tau)\|_{L^2}^2\,d\tau \leq \|u_0\|_{\my^1} + \|u_0\|_{L^2}^2 \int_0^t |\xi|^2 e^{-(t-\tau)|\xi|^2}\,d\tau \leq C_{0}
\end{aligned}
\]
from which we deduce that 
\[
\sup_{0\le t <\infty} \|u(t)\|_{\my^1} \leq C_{0}.
\]

\noindent 
$\blacktriangleright$ ({\bf Case 2}: $\sigma \in [0,1)$). For $|\xi|\leq1$, we obtain
\[
|\xi| |\widehat u(t,\xi)| \leq |\xi|^{1-\sigma}|\xi|^{\sigma}e^{-t |\xi|^2} |\widehat u_0(\xi)| + \int_0^t |\xi|^2 e^{-(t-\tau)|\xi|^2} \|u(\tau)\|_{L^2}^2\,d\tau \leq \|u_0\|_{\my^{\sigma}} + \|u_0\|_{L^2}^2\leq C_0.
\]
Then, Lemma \ref{main lemma 1} yields
\[
\|u(t)\|_{L^2}^2  \le C_{0}(1+t)^{-\frac 12} \quad \text{for all $t>0$.}  
\]
By \eqref{heat_inf_est},
\bq\label{NS_Y_est1}
\begin{aligned}
|\xi|^{\sigma} |\widehat u(t,\xi)| &\le  |\xi|^{\sigma} e^{-t |\xi|^2} |\widehat u_0(\xi)| + \int_0^t |\xi|^{1+\sigma} e^{-(t-\tau)|\xi|^2} \|u(\tau)\|_{L^2}^2\,d\tau\\
&\le \|u_0\|_{\my^{\sigma}} + C_{0}\int_0^t (t-\tau)^{-\frac{1+\sigma}{2}}(1+\tau)^{-\frac 12}\,d\tau\leq C_{0},
\end{aligned}
\eq
where we use
\eqn \label{time bound 1}
\begin{aligned}
\int_0^t (t-\tau)^{-\frac{1+\sigma}{2}}(1+\tau)^{-\frac 12}\,d\tau&\le \int_0^t (t-\tau)^{-\frac{1+\sigma}{2}}(1+\tau)^{-\frac {1-\sigma}{2}}\,d\tau\le \int_0^t (t-\tau)^{-\frac{1+\sigma}{2}}\tau^{-\frac {1-\sigma}{2}}\,d\tau\\
&= \int_0^1 (1-\theta)^{-\frac{1+\sigma}{2}}\theta^{-\frac {1-\sigma}{2}}\,d\theta = \mathcal{B}\lt(\frac{1-\sigma}{2}, \frac{1+\sigma}{2} \rt).
\end{aligned}
\een

\noindent 
$\blacktriangleright$ ({\bf Case 3}: $\sigma \in [-1,0]$)
When $\sigma\in[-1,0]$, we have
\[
\sup_{0\leq t<\infty}\sup_{|\xi|\leq1}|\xi||\widehat{u}(t,\xi)|\leq C_0,\quad\text{and}\quad \norm{u(t)}{L^2}^2\leq C_0(1+t)^{-\frac{1}{2}}
\]
for all $t>0$ as in \textbf{Case 2}. For $|\xi|\leq1$ we obtain
\[
\begin{aligned}
|\widehat u(t,\xi)| &\le e^{-t |\xi|^2} |\widehat u_0(\xi)| + \int_0^t |\xi| e^{-(t-\tau)|\xi|^2} \|u(\tau)\|_{L^2}^2\,d\tau \le \|u_0\|_{\my^{\sigma}} + C_{0}\int_0^t (t-\tau)^{-\frac{1}{2}}(1+\tau)^{-\frac 12}\,d\tau\leq C_{0}
\end{aligned}
\]
which implies 
\[
\|u(t)\|_{L^2}^2  \le C(1+t)^{-\frac 32} \quad \text{for all $t>0$}.
\]
So, we bound $u$ as follows:
\bq\label{NS_Y_est2}
\begin{aligned}
|\xi|^{\sigma} |\widehat u(t,\xi)| &\le |\xi|^{\sigma}e^{-t |\xi|^2} |\widehat u_0(\xi)| + \int_0^t |\xi|^{1+\sigma} e^{-(t-\tau)|\xi|^2}\|u(\tau)\|_{L^2}^2\,d\tau\\
&\le \|u_0\|_{\my^{\sigma}} + C_{0}\int_0^t (t-\tau)^{-\frac{1+\sigma}{2}}(1+\tau)^{-\frac 32} \,d\tau \leq C_{0}.
\end{aligned}
\eq
Thus, we use Lemma \ref{main lemma 1} to obtain the desired temporal decay and this completes the proof of Theorem \ref{Theorem 1}.

\subsection{Proof of Corollary \ref{Corollary 1}}
We decompose $\left\|u\right\|^{2}_{\dot{H}^{-\delta}}$ as follows: 
\[
\left\|u\right\|^{2}_{\dot{H}^{-\delta}}=\int_{|\xi|\leq M} |\xi|^{-2\delta} \left|\widehat{u}(\xi)\right|^{2}d\xi+\int_{|\xi|\ge M} |\xi|^{-2\delta} \left|\widehat{u}(\xi)\right|^{2}d\xi\leq \text{(I)}+M^{-2\delta}\|u\|^{2}_{L^{2}},
\]
where $M$ is decided below. We now bound $\text{(I)}$: when $\delta<\frac{3}{2}-\sigma$,
\[
\text{(I)}=\int_{|\xi|\leq M} |\xi|^{-2\delta-2\sigma} |\xi|^{2\sigma}\left|\widehat{u}(\xi)\right|^{2}d\xi\leq C\|u\|^{2}_{\my^{\sigma}}\int^{M}_{0}r^{-2\delta-2\sigma+2}dr=C \|u\|^{2}_{\my^{\sigma}} M^{-2\delta-2\sigma+3}.
\]
By taking $M$ satisfying $M^{-2\delta}\|u\|^{2}_{L^{2}}=\|u\|^{2}_{\my^{\sigma}} M^{-2\delta-2\sigma+3}$, 
we have
\[
\left\|u(t)\right\|^{2}_{\dot{H}^{-\delta}} \leq C \|u(t)\|^{\frac{2\delta}{3/2-\sigma}}_{\my^{\sigma}} \|u(t)\|^{\frac{3-2\sigma-2\delta}{3/2-\sigma}}_{L^{2}}\leq C_{0}(1+t)^{-\frac{3}{2}+\sigma+\delta}
\]
which implies the desired estimate.

\subsection{Proof of Theorem \ref{Theorem 2}}
From \cite{Lei Lin}, we deduce that if $\norm{u_0}{\mx^{-1}}\leq\epsilon<1$, then $u\in L^{\infty}([0,\infty);\mx^{-1})\cap L^{1}([0,\infty);\mx^{1})$ and 
\[
\norm{u(t)}{\mx^{-1}}+(1-\epsilon)\int^{t}_{0}\norm{u(\tau)}{\mx^{1}}\,d\tau\leq\epsilon\quad\text{for all $t>0$}.
\]
Moreover, $u$ satisfies \eqref{X inequality} with $k=0$ and $\theta=1-\epsilon$. We now show  $u\in L^\infty([0,\infty);\my^{\sigma})$. To do so, we use    
\eqn  \label{est_conv 1}
1 \le \frac{|\eta|^\alpha}{2|\xi-\eta|^\alpha} + \frac{|\xi-\eta|^\alpha}{2|\eta|^\alpha}.
\een

\noindent $\blacktriangleright$ ({\bf Case 1}: $\sigma=1$) By using (\ref{est_conv 1}) with $\alpha=1$, we obtain 
\[
\begin{aligned}
|\xi||\widehat u(t,\xi)| &\le |\xi| e^{-t|\xi|^2 }|\widehat u_0(\xi)| + \int_0^t |\xi|^2  e^{-(t-\tau)|\xi|^2}\int |\widehat u(\tau,\xi-\eta)|  |\widehat u(\tau,\eta)|\,d\eta d\tau\\
&\le \|u_0\|_{\my^1} + \sup_{0 \le \tau \le t} \left(\|u(\tau)\|_{\my^1} \|u(\tau)\|_{\mx^{-1}} \right)\int_0^t  |\xi|^2 e^{-(t-\tau)|\xi|^2}\,d\tau\leq \|u_0\|_{\my^1}+ C\epsilon \sup_{0 \le \tau \le t}\|u(\tau)\|_{\my^1}
\end{aligned}
\]
and so we have 
\[
\sup_{0 \le \tau \le t}\|u(\tau)\|_{\my^1} \leq \|u_0\|_{\my^1} +C\epsilon \sup_{0 \le \tau \le t}\|u(\tau)\|_{\my^1}.
\]

\noindent $\blacktriangleright$ ({\bf Case 2}: $\sigma \in [-1,1)$) When $\sigma \in (-1,1)$, we use  (\ref{est_conv 1}) with $\alpha=\sigma$ to estimate $u$ as
\[
\begin{aligned}
|\xi|^\sigma |\widehat u(t,\xi)| &\le |\xi|^\sigma e^{-t|\xi|^2}|\widehat u_0(\xi)|+ \int_0^t |\xi|^{1+\sigma}  e^{-(t-\tau)|\xi|^2}\intr |\widehat u(\tau,\xi-\eta)|  |\widehat u(\tau,\eta)|\,d\eta d\tau\\
&\le \|u_0\|_{\my^\sigma} +\int_0^t |\xi|^{1+\sigma}  e^{-(t-\tau)|\xi|^2} \|u(\tau)\|_{\mx^{-\sigma}}\|u(\tau)\|_{\my^\sigma}\, d\tau\\
&\le \|u_0\|_{\my^{\sigma}} +\sup_{0\le \tau \le t}\|u(\tau)\|_{\my^\sigma} \Big(\int_0^t |\xi|^2 e^{-(t-\tau)\frac{2}{1+\sigma}|\xi|^2 }\,d\tau \Big)^{\frac{1+\sigma}{2}}\Big(\int_0^t \|u(\tau)\|_{\mx^{-\sigma}}^{\frac{2}{1-\sigma}}\,d\tau \Big)^{\frac{1-\sigma}{2}}\\
&\le \|u_0\|_{\my^\sigma} + C\sup_{0\le \tau \le t}\|u(\tau)\|_{\my^\sigma} \Big(\sup_{0\le \tau \le t}\|u(\tau)\|_{\mx^{-1}} \Big)^{\frac{1+\sigma}{2}} \Big(\int_0^t \|u(\tau)\|_{\mx^1} \,d\tau\Big)^{\frac{1-\sigma}{2}}\\
&\leq \|u_0\|_{\my^\sigma}+ C\epsilon \sup_{0 \le \tau \le t}\|u(\tau)\|_{\my^\sigma}, 
\end{aligned}
\]
where we also use (\ref{intp_x 1}) to handle $\|u(\tau)\|_{\mx^{-\sigma}}$. 

When $\sigma=-1$, we use \eqref{est_conv 1} with $\alpha=1$ to get
\[
\begin{aligned}
|\xi|^{-1} |\widehat u(t,\xi)| &\le |\xi|^{-1} e^{-t|\xi|^2}|\widehat u_0(\xi)|+ \int_0^t  e^{-(t-\tau)|\xi|^2}\int |\widehat u(\tau,\xi-\eta)|  |\widehat u(\tau,\eta)|\,d\eta d\tau\\
&\le \|u_0\|_{\my^{-1}} + \sup_{0\le \tau \le t}\|u(\tau)\|_{\my^{-1}} \int_0^t \|u(\tau)\|_{\mx^1} \,d\tau.
\end{aligned}
\]

\noindent $\blacktriangleright$ Combining these estimates, we arrive at the following inequality
\[
\sup_{0 \le t < \infty} \|u(t)\|_{\my^{\sigma}} \leq \|u_0\|_{\my^{\sigma}} +C\epsilon \sup_{0 \le t < \infty}\|u(t)\|_{\my^{\sigma}}
\]
for all $\sigma\in [-1,1]$. By restricting the size of $\epsilon>0$ as $2C\epsilon< 1$, we obtain $u \in L^{\infty}([0,\infty); \my^{\sigma})$. The desired decay rates follow by Lemma \ref{main lemma 2}.

\vspace{1ex}

\noindent $\blacktriangleright$ Finally when $k>0$, we need to show that $u\in L^{\infty}([0,\infty);\mx^{k-1})\cap L^{1}([0,\infty);\mx^{k+1})$ and $u$ satisfies \eqref{X inequality} with $k>0$. By using \eqref{intp_x 3}, we obtain 
\[
\begin{aligned}
\frac{d}{dt}\|u(t)\|_{\mx^{k-1}} + \|u(t)\|_{\mx^{k+1}} &\le \int\int |\xi|^{k}|\widehat u (t, \xi-\eta)| |\widehat u(t,\eta)| \,d\eta d\xi \le C\|u(t)\|_{\mx^{0}}\|u(t)\|_{\mx^k}\\
&\le C\|u(t)\|_{\mx^{-1}}\|u(t)\|_{\mx^{k+1}} \leq C\epsilon \|u(t)\|_{\mx^{k+1}}. 
\end{aligned}
\]
Then, we can choose $\epsilon > 0$ sufficiently small such that $\theta = 1 - C\epsilon > 0$. This yields \eqref{X inequality} with $k > 0$, and a direct application of Lemma \ref{main lemma 2} completes the proof of Theorem \ref{Theorem 2}.

\section{ Hall-magnetohydrodynamic equations} \label{sec:HMHD}
In this section, we prove the decay rates results for (\ref{Hall MHD}). We first write 
\[
(\nabla\times B)\times B=\dv(B\otimes B)-\frac{1}{2}\nabla |B|^{2},\quad \nabla \times ((\nabla \times B)\times B)=\nabla \times \dv (B\otimes B),
\]
and express $(u,B)$ as the integral form
\begin{subequations}\label{HMHD Integral}
\begin{align}
u(t)&=e^{t\Delta}u_{0}-\int^{t}_{0}e^{(t-\tau)\Delta}\mathbb{P}\left(\dv (u\otimes u) -\dv(B\otimes B) \right)(\tau)\,d\tau, \label{HMHD Integral a}\\
B(t)&=e^{t\Delta}B_{0}+\int^{t}_{0}e^{(t-\tau)\Delta}\left(\nabla \times (u\times B) -\nabla \times \dv (B\otimes B)\right)(\tau)\,d\tau. \label{HMHD Integral b}
\end{align}
\end{subequations}
By taking the Fourier transform to (\ref{HMHD Integral}), we have
\[
\begin{split}
\widehat{u}(t,\xi)&=e^{-t|\xi|^{2}}\widehat{u}_{0}(\xi)-\int^{t}_{0}e^{-(t-\tau)|\xi|^{2}} m(\xi)i\xi\cdot \left(\widehat{u\otimes u} - \widehat{B\otimes B}\right)(\tau,\xi)d\tau,\\
\widehat{B}(t,\xi)&=e^{-t|\xi|^{2}}\widehat{B}_{0}(\xi) + \int^{t}_{0}e^{-(t-\tau)|\xi|^{2}}i\xi \times (\widehat{u\times B})(\tau, \xi)d\xi - \int^{t}_{0}e^{-(t-\tau)|\xi|^{2}} i\xi \times i \xi \cdot (\widehat{B\otimes B})(\tau,\xi)d\xi.
\end{split}
\]

\subsection{Proof of Theorem \ref{Theorem 3}}
Although \eqref{HMHD Integral a} contains the term $B \otimes B$, we can utilize \eqref{HMHD energy} to bound $u$ analogously to the case of \eqref{NSE}. Thus, we apply the estimates for $u$ derived in Section \ref{sec:NSE} and focus on the estimation of $B$. Throughout the proof of Theorem \ref{Theorem 3}, we will repeatedly invoke Lemma \ref{main lemma 1} with $\|f\|^{2}_{L^{2}} = \|u\|_{L^2}^2 + \|B\|_{L^2}^2$ and $\|\nabla f\|^{2}_{L^{2}} = \|\nabla u\|_{L^2}^2 + \|\nabla B\|_{L^2}^2$. We divide the range of $(\sigma_1, \sigma_2)$ into two cases.

\vspace{1ex}

\noindent $\blacktriangleright$ ({\bf Case 1}: $\sigma_1\in[0,1]$, $\sigma_2\in[-1,0]$) Using \eqref{HMHD energy} we first have
\eqn \label{u Y1}
\sup_{0\le t <\infty}\sup_{|\xi|\leq1} |\xi||\widehat u(t,\xi)| \leq C_{0}.
\een
For $B$, we note that for $|\xi|\le 1$,
\[
\begin{aligned}
|\xi| |\widehat B(t,\xi)| &\le |\xi| e^{-t|\xi|^2}|\widehat B_0(\xi)| + \int_0^t |\xi|^2 e^{-(t-\tau)|\xi|^2} \|u(\tau)\|_{L^2}\|B(\tau)\|_{L^2}\,d\tau + \int_0^t |\xi|^3 e^{-(t-\tau)|\xi|^2}\|B(\tau)\|_{L^2}^2\,d\tau\\
&\le \|B_0\|_{\my^{\sigma_2}} + 2(\|u_0\|_{L^2}^2+\|B_0\|_{L^2}^2)\int_0^t |\xi|^2 e^{-(t-\tau)|\xi|^2} \,d\tau \le  C_{0}.
\end{aligned}
\]
By combining this with (\ref{u Y1}), 
\[
\sup_{0\le t<\infty} \sup_{|\xi| \le  1} \lt(|\xi| |\widehat u(t,\xi)| + |\xi||\widehat B(t,\xi)|\rt)\leq C_{0}.
\]
Hence, we apply  $\sigma=1$ to Lemma \ref{main lemma 1} to get
\begin{equation}\label{decay_est1}
\|u(t)\|_{L^2}^2 + \|B(t)\|_{L^2}^2 \le C_{0}(1+t)^{-\frac 12}  \quad \text{for all $t>0$.  }
\end{equation}
Then, we use \eqref{decay_est1} and proceed analogously to \eqref{NS_Y_est1} to obtain 
\[
\sup_{0\leq t<\infty}\|u(t)\|_{\my^{\sigma_1}}\leq C_0.
\]
This also implies that for $|\xi|\leq 1$,
\[
\begin{aligned}
|\widehat B(t,\xi)| &\le |\xi|^{-\sigma_2}|\xi|^{\sigma_2} e^{-t|\xi|^2}|\widehat B_0(\xi)| + \int_0^t |\xi| e^{-(t-\tau)|\xi|^2} \|u(\tau)\|_{L^2}\|B(\tau)\|_{L^2}\,d\tau + \int_0^t |\xi|^2 e^{-(t-\tau)|\xi|^2} \|B(\tau)\|_{L^2}^2\,d\tau\\
&\le \|B_0\|_{\my^{\sigma_2}}+ C_{0}\int_0^t (t-\tau)^{-\frac12}(1+\tau)^{-\frac 12}\,d\tau + C_0 \leq C_{0}.
\end{aligned}
\]
Since $\sigma_1\in[0,1]$, we arrive at 
\[
\sup_{0\leq t<\infty}\sup_{|\xi|\leq1}\left(|\xi|^{\sigma_1}|\widehat u(t,\xi)|+|\xi|^{\sigma_1}|\widehat B(t,\xi)|\right)\leq C_0,
\]
and by Lemma \ref{main lemma 1},
\[
\|u(t)\|_{L^2}^2+\|B(t)\|_{L^2}^2\leq C_0(1+t)^{-\frac{3}{2}+\sigma_1}\quad\text{for all $t>0$}.
\]

\noindent $\blacktriangleright$ ({\bf Case 2}: $\sigma_1 \in [-1,0]$, $\sigma_2 \in [-1,0]$): Proceeding as in Case 1, we obtain
\[
\sup_{0\le t <\infty} \sup_{|\xi| \le  1} \lt(|\widehat u(t,\xi)| + |\widehat B(t,\xi)|\rt)<\infty,
\]
and hence
\[
\|u(t)\|_{L^2}^2 + \|B(t)\|_{L^2}^2 \le C(1+t)^{-\frac 32} \quad \text{for all $t>0$.  }
\]
Applying the above inequality and arguing similarly to the derivation of \eqref{NS_Y_est2}, we obtain
\[
\sup_{0\le t <\infty}\|u(t)\|_{\my^{\sigma_{1}}} \leq C_{0}.
\]
Fo $B$, we proceed as follows:
\begin{equation} \label{B bound in Y}
\begin{aligned}
|\xi|^{\sigma_2} |\widehat B(t,\xi)| &\le |\xi|^{\sigma_2} e^{-t|\xi|^2}|\widehat B_0(\xi)| + \int_0^t |\xi|^{1+\sigma_2} e^{-(t-\tau)|\xi|^2} \|u(\tau)\|_{L^2}\|B(\tau)\|_{L^2}\,d\tau\\
&\quad + \int_0^t |\xi|^{2+\sigma_2} e^{-(t-\tau)|\xi|^2} \|B(\tau)\|^{2}_{L^2}\,d\tau\\
&\le \|B_0\|_{\my^{\sigma_2}} + C_{0}\int_0^t (t-\tau)^{-\frac{1+\sigma_2}{2}} (1+\tau)^{-\frac 32} \,d\tau + C_{0}\int_0^t (t-\tau)^{-\frac{2+\sigma_2}{2}} (1+\tau)^{-\frac 32}\,d\tau \leq C_{0}.
\end{aligned}
\end{equation}
Hence we arrive at
\[
\sup_{0\le t<\infty}\sup_{|\xi|\le 1} |\xi|^{\max\{\sigma_1, \sigma_2\}}\lt( |\widehat u(t,\xi)| + |\widehat B(t,\xi)|\rt)\leq C_{0}
\]
from which we obtain the desired temporal decay by Lemma \ref{main lemma 1}. This completes the proof of Theorem \ref{Theorem 3}.

\subsection{Proof of Corollary \ref{Corollary 3}}
We now show that $B \in L^\infty([0,\infty); \mathcal{Y}^{\sigma_2})$, provided that \eqref{index_condition} holds. As demonstrated in \eqref{B bound in Y} for Case 2, we only need to verify the claim for Case 1. From Theorem \ref{Theorem 3} and the fact that $\sigma_1 \geq \sigma_2$ in Case 1, we have
\[
\begin{aligned}
|\xi|^{\sigma_2} |\widehat B(t,\xi)| &\le |\xi|^{\sigma_2} e^{-t|\xi|^2}|\widehat B_0(\xi)| + \int_0^t |\xi|^{1+\sigma_2} e^{-(t-\tau)|\xi|^2} \|u(\tau)\|_{L^2}\|B(\tau)\|_{L^2}\,d\tau \\
&+ \int_0^t |\xi|^{2+\sigma_2} e^{-(t-\tau)|\xi|^2} \|B(\tau)\|^{2}_{L^2} \,d\tau\\
& \le \|B_0\|_{\my^{\sigma_2}} + C_{0}\int_0^t (t-\tau)^{-\frac{1+\sigma_2}{2}} (1+\tau)^{-\frac 32+\sigma_1} \,d\tau  + \int_0^t |\xi|^{2+\sigma_2} e^{-(t-\tau)|\xi|^2} \|B(\tau)\|^{2}_{L^2} \,d\tau\\
&\le  C_{0}+\text{(I)}+\text{(II)}.
\end{aligned}
\]
We first estimate $\text{(I)}$ as follows: 
\eqn \label{time bound 2}
\begin{aligned}
\text{(I)}&\le C_{0}\int_0^t (t-\tau)^{-\frac{1+\sigma_2}{2}}(1+\tau)^{-\frac {1-\sigma_2}{2}}\,d\tau\le C_{0}\int_0^t (t-\tau)^{-\frac{1+\sigma_2}{2}}\tau^{-\frac {1-\sigma_2}{2}}\,d\tau\\
&=C_{0} \int_0^1 (1-\theta)^{-\frac{1+\sigma_2}{2}}\theta^{-\frac {1-\sigma_2}{2}}\,d\theta = C_{0}\mathcal{B}\lt(\frac{1-\sigma_2}{2}, \frac{1+\sigma_2}{2} \rt)\leq C_{0},
\end{aligned}
\een
where we used $\sigma_1 - \frac{\sigma_2}{2}\leq 1$ when $\sigma_2 \neq -1$. If $\sigma_2 =-1$, we use $\sigma_1 - \frac{\sigma_2}{2} = \sigma_1 +\frac12< 1$ to get
\eqn \label{time bound 3}
C_{0}\int_0^t (t-\tau)^{-\frac{1+\sigma_2}{2}}(1+\tau)^{-\frac 32 +\sigma_1}\,d\tau= C_{0}\int_0^t (1+\tau)^{-\frac 32 + \sigma_1}\,d\tau= \frac{C_{0}}{\frac12 - \sigma_1}\lt( 1- (1+t)^{-\frac12 +\sigma_1}\rt)  \leq C_0.
\een
We next bound $\text{(II)}$. For $\sigma_2=0$, one has
\[
\text{(II)} \leq C_0 \int_0^t |\xi|^2 e^{-(t-\tau)|\xi|^2}\,d\tau \le C_0.
\]
For $\sigma_2 <0$, since $\sigma_{1}-\frac{\sigma_{2}}{2}\leq \frac{3}{2}$ holds, we get
\[
\text{(II)}\leq C_{0}\int_0^t (t-\tau)^{-1-\frac{\sigma_2}{2}}(1+\tau)^{\frac{\sigma_{2}}{2}} \,d\tau \leq C_{0}\int_0^t (t-\tau)^{-1-\frac{\sigma_2}{2}}\tau^{\frac{\sigma_{2}}{2}} \,d\tau=C_{0}\mathcal{B}\lt(1+\frac{\sigma_{2}}{2}, -\frac{\sigma_2}{2} \rt)\leq C_{0}
\]
and this implies our desired estimate
\[
\sup_{0\leq t<\infty}\norm{B(t)}{\my^{\sigma_{2}}}\leq C_{0}.
\]

\begin{remark}\upshape
In the proof of Corollary \ref{Corollary 3}, the condition \eqref{index_condition} plays a crucial role. Even without \eqref{index_condition}, we can still show that $B \in L^\infty([0,T); \mathcal{Y}^{\sigma_2})$ for any finite $T > 0$; however, the upper bound for $\|B(t)\|_{\mathcal{Y}^{\sigma_2}}$ depends on $t$:  if $\sigma_1-\frac{\sigma_2}{2}>1$ and $\sigma_2> -1$
\[
\text{(I)}\leq C_0\int^{t}_{0}(t-\tau)^{-\frac{1+\sigma_{2}}{2}}(1+\tau)^{-\frac{3}{2}+\sigma_{1}}\,d\tau \leq C_0 (1+t)^{\sigma_1-\frac{\sigma_2}{2}-1},
\]
and if $\sigma_1\geq \frac{1}{2}$ and $\sigma_2=-1$
\[
\text{(I)}\leq C_0\int^{t}_{0}(1+\tau)^{-\frac{3}{2}+\sigma_{1}}\,d\tau\leq \begin{cases}
C_0(1+t)^{\sigma_1-\frac{1}{2}}\quad \text{if $\sigma_1>\frac{1}{2}$,} \\
C_0\log(1+t)\quad \text{if $\sigma_1=\frac{1}{2}$.}
\end{cases}
\]
\end{remark}

\subsection{Proof of Theorem \ref{Theorem 4}}
As a preliminary step toward proving Theorem \ref{Theorem 4}, we derive the decay rates for the coupled higher-order norms $\|\nabla^{k} u(t)\|_{L^2}^2 + \|\nabla^{k} B(t)\|_{L^2}^2$.

\begin{lemma}\upshape \label{lemma 2}
Under the assumptions in Theorem \ref{Theorem 4}, there exists a unique solution $(u,B)\in L^\infty((0,\infty);H^{\frac{1}{2}}\times H^{\frac{3}{2}})$, and for any $k\in\mathbb{N}\cup \{0\}$ there exists ${T}_k>0$ and $C_k>0$ such that 
\[
\norm{\nabla^{k} u(t)}{L^2}^2+\norm{\nabla^{k} B(t)}{L^2}^2\leq C_{k}(1+t)^{-\frac{3}{2}+\max{\{\sigma_1,\sigma_2\}}-k}\quad\text{for all $t>{T}_k$}.
\]
\end{lemma}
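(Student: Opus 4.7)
The plan is to establish Lemma \ref{lemma 2} by induction on $k$, combining the global strong solution theory with an iterated Fourier splitting argument. Global existence and uniqueness of the strong solution $(u,B)\in L^\infty((0,\infty);H^{\frac{1}{2}}\times H^{\frac{3}{2}})$ follows directly from \cite{DT22, Tan} under the smallness hypothesis on $(u_0,B_0)$. The base case $k=0$ of the decay estimate is the content of Theorem \ref{Theorem 3}; crucially, its proof delivers the uniform low-frequency pseudo-measure bound
\[
\sup_{0\le t<\infty}\sup_{|\xi|\le 1}|\xi|^{\sigma}\bigl(|\widehat u(t,\xi)|+|\widehat B(t,\xi)|\bigr)\le C_0,\qquad \sigma:=\max\{\sigma_1,\sigma_2\},
\]
which is the key low-frequency input for the inductive step.

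For the passage from $k-1$ to $k$, I would first argue that parabolic regularization for small strong solutions upgrades the regularity of $(u,B)$ to $H^k\times H^k$ on $(\tilde T_k,\infty)$, justifying the formal energy identity for $\nabla^k(u,B)$. Applying $\nabla^k$ to \eqref{Hall MHD a}--\eqref{Hall MHD b}, taking the $L^2$ inner product with $(\nabla^k u,\nabla^k B)$, and using the fractional Leibniz rule \eqref{fractional Leibniz} together with the Sobolev embeddings of Section \ref{sec:Prelim} gives
\[
\frac{d}{dt}E_k(t)+2\bigl(\|\nabla^{k+1}u\|_{L^2}^2+\|\nabla^{k+1}B\|_{L^2}^2\bigr)\le \mathcal{N}_k(u,B), \qquad E_k(t):=\|\nabla^k u(t)\|_{L^2}^2+\|\nabla^k B(t)\|_{L^2}^2,
\]
where $\mathcal{N}_k$ collects the nonlinear contributions, including the Hall term $\nabla^k\bigl[\nabla\times((\nabla\times B)\times B)\bigr]$. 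Exploiting the propagation of smallness of $(u,B)$ in $H^{\frac{1}{2}}\times H^{\frac{3}{2}}$, $\mathcal{N}_k$ is absorbed into the dissipation, yielding $\tfrac{d}{dt}E_k+(\|\nabla^{k+1}u\|_{L^2}^2+\|\nabla^{k+1}B\|_{L^2}^2)\le 0$ on $(\tilde T_k,\infty)$.

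Then I apply the Fourier splitting method of Lemma \ref{main lemma 1} to $(\nabla^k u,\nabla^k B)$: splitting at $\{|\xi|^2\le N/(1+t)\}$ and using the inequality $|\xi|^{2(k+1)}\ge \tfrac{N}{1+t}|\xi|^{2k}$ outside the ball produces
\[
\frac{d}{dt}\bigl[(1+t)^N E_k(t)\bigr]\le N(1+t)^{N-1}\int_{\{|\xi|^2\le N/(1+t)\}}|\xi|^{2k}\bigl(|\widehat u|^2+|\widehat B|^2\bigr)\,d\xi \le C(1+t)^{N-\frac{5}{2}+\sigma-k},
\]
where the last estimate invokes the uniform pseudo-measure bound above (admissible since $\sigma\le 1$, so $2k-2\sigma+3>0$). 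Choosing $N>\tfrac{3}{2}-\sigma+k$ and integrating yields the desired $E_k(t)\le C_k(1+t)^{-\frac{3}{2}+\sigma-k}$ for all $t>T_k$. The main obstacle is the treatment of the Hall term in $\mathcal{N}_k$: since it involves third-order derivatives of $B$, one must carefully split $\|\nabla^k[(\nabla\times B)\times B]\|_{L^2}$ via \eqref{fractional Leibniz} with one factor at $\dot H^{k+1}$ and the other at a subcritical Sobolev level, then invoke the $\dot H^{\frac{3}{2}}$ smallness of $B$ so that this term is absorbed; a secondary subtlety is the time-weighted bootstrap needed to reach $H^k\times H^k$ regularity from the initial $H^{\frac{1}{2}}\times H^{\frac{3}{2}}$ data before running the Fourier splitting refinement.
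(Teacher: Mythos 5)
Your overall architecture matches the paper's: global strong solution from \cite{DT22, Tan}, an $\dot H^k$ energy inequality with the nonlinearity absorbed for $t>\tilde T_k$, the low-frequency bound $\sup_{t}\sup_{|\xi|\le 1}|\xi|^{\max\{\sigma_1,\sigma_2\}-k}\bigl(|\widehat{\nabla^k u}|+|\widehat{\nabla^k B}|\bigr)\le C_0$ inherited from Theorem \ref{Theorem 3}, and then Lemma \ref{main lemma 1} (the "induction on $k$" framing is superfluous, since the splitting step never uses the level $k-1$ estimate). However, your treatment of the Hall term has a genuine gap. After one integration by parts you must control $\int \nabla^{k}\bigl((\nabla\times B)\times B\bigr)\cdot\nabla^{k}\nabla\times B$, and the top-order term produced by the Leibniz rule \eqref{fractional Leibniz} is $\bigl(\nabla^{k}(\nabla\times B)\times B\bigr)\cdot\nabla^{k}\nabla\times B$. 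To absorb it into the dissipation $\|\nabla^{k+1}B\|_{L^2}^2$ you would need the companion factor in $L^\infty$, i.e.\ $\|B\|_{L^\infty}\|\nabla^{k+1}B\|_{L^2}^2$, and $\|B\|_{L^\infty}$ is \emph{not} controlled by the $\dot H^{\frac12}\cap\dot H^{\frac32}$ smallness you invoke (the critical embedding fails); any other H\"older split pushes the top factor to $\|\nabla^{k+2}B\|_{L^2}$, which cannot be absorbed either. The paper's key device, which your proposal misses, is the pointwise cancellation $\bigl(\nabla^{k}(\nabla\times B)\times B\bigr)\cdot\nabla^{k}\nabla\times B=0$: one subtracts this term and estimates only the commutator $\nabla^{k}\bigl((\nabla\times B)\times B\bigr)-\nabla^{k}(\nabla\times B)\times B$, which places at most $k+1$ derivatives on one copy of $B$ and at least one derivative on the other, yielding the absorbable bound $C\|\nabla B\|_{L^3}\|\nabla^{k+1}B\|_{L^2}^2$ via $\dot H^{\frac12}\hookrightarrow L^3$.

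A secondary imprecision is your absorption mechanism itself. You invoke ``propagation of smallness'' in $H^{\frac12}\times H^{\frac32}$, but the constants in the $k$-th order Leibniz and Sobolev estimates depend on $k$, so a fixed initial smallness (which is all Theorem \ref{Theorem 4} grants) cannot beat $C(k)$ for every $k$. The paper instead uses that the critical norms tend to zero, \eqref{critical_norm_decay}, obtained from Theorem \ref{Theorem 3} together with Sobolev interpolation (or \cite[Corollary 1.2]{Tan}), and chooses $\tilde T_k$ so large that $C(k)\bigl(\|u(t)\|_{L^3}+\|\nabla B(t)\|_{L^3}\bigr)\le \tfrac12$ for $t>\tilde T_k$; your $\tilde T_k$ is introduced only for the regularity upgrade, so as written the absorption step is not justified uniformly in $k$. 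Once these two points are repaired, the Fourier splitting computation you describe is exactly the paper's and gives the stated rate.
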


\begin{proof}
Suppose $\|u_0\|_{\dot{H}^{\frac{1}{2}}} + \|B_0\|_{\dot{H}^{\frac{1}{2}}} + \|B_0\|_{\dot{H}^{\frac{3}{2}}}$ is sufficiently small. In this case, the results in \cite{DT22, Tan} guarantee the existence of a global strong solution $(u,B) \in L^\infty([0,\infty); \dot{H}^{\frac{1}{2}} \times (\dot{H}^{\frac{1}{2}} \cap \dot{H}^{\frac{3}{2}}))$. Due to the parabolic smoothing effect, it follows that $(u,B) \in C_b([t_0,\infty); H^k)$ and $(\nabla u, \nabla B) \in L^2(t_0,\infty; H^k)$ for any $t_0 > 0$ and $k \in \mathbb{N}$. Furthermore, by combining Theorem \ref{Theorem 3} with Sobolev interpolation (or alternatively using \cite[Corollary 1.2]{Tan}), we deduce that
\begin{equation}\label{critical_norm_decay}
\|u(t)\|_{\dot{H}^{\frac{1}{2}}}+\|B(t)\|_{\dot{H}^{\frac{1}{2}}}+\|B(t)\|_{\dot{H}^{\frac{3}{2}}}\rightarrow 0\qquad\text{as $t\rightarrow\infty$.}
\end{equation}

Next, we establish the decay rates for $\|\nabla^k u(t)\|_{L^2}^2 + \|\nabla^k B(t)\|_{L^2}^2$. To this end, we first derive energy estimates for each variable, $u$ and $B$, individually. Applying $\nabla^{k}$ to \eqref{Hall MHD a}, taking the inner product with $\nabla^{k}u$, and employing the fractional Leibniz rule \eqref{fractional Leibniz}, we obtain
\begin{equation}\label{u_derivative_energy}
\begin{split}
\frac{1}{2}\frac{d}{dt}\left\|\nabla^{k}u\right\|^{2}_{L^{2}}+\left\|\nabla^{k+1}u\right\|^{2}_{L^{2}} &=-\int\nabla^{k}(u\cdot\nabla u)\cdot\nabla^{k}u+\int\nabla^{k}(B\cdot\nabla B)\cdot\nabla^{k}u\\
&  =\int\nabla^{k}(u\otimes u): \nabla^{k}\nabla u- \int\nabla^{k}(B\otimes B): \nabla^{k}\nabla u\\
&\leq C \|u\|_{L^3}\left\|\nabla^{k+1}u\right\|^{2}_{L^{2}}+C \|B\|_{L^3}\left\|\nabla^{k+1}B\right\|_{L^{2}} \left\|\nabla^{k+1}u\right\|_{L^{2}}.
\end{split}
\end{equation}
Similarly by taking $\nabla^{k}$ to (\ref{Hall MHD b}) and by taking the inner product with $\nabla^{k}B$, and using \eqref{fractional Leibniz}, we obtain 
\begin{equation}\label{B_derivative_energy}
\begin{split}
& \frac{1}{2}\frac{d}{dt}\left\|\nabla^{k}B\right\|^{2}_{L^{2}}+\left\|\nabla^{k+1}B\right\|^{2}_{L^{2}} =  \int\nabla^{k}(u\times B)\cdot\nabla^{k}\nabla\times B-\int\nabla^{k}((\nabla\times B)\times B)\cdot\nabla^{k}\nabla\times B \\
&= \int\nabla^{k}(u\times B)\cdot\nabla^{k}\nabla\times B-\int \left[\nabla^{k}((\nabla\times B)\times B)-\nabla^k(\nabla\times B)\times B\right] \cdot\nabla^{k}\nabla\times B \\
&\leq C \left\|u\right\|_{L^3}\left\|\nabla^{k+1}B\right\|^{2}_{L^{2}}+C \left\|B\right\|_{L^3}\left\|\nabla^{k+1}u\right\|_{L^{2}} \left\|\nabla^{k+1}B\right\|_{L^{2}}+C\left\|\nabla B\right\|_{L^3}\left\|\nabla^{k+1}B\right\|^{2}_{L^{2}}.
\end{split}
\end{equation}
Combining \eqref{u_derivative_energy} and \eqref{B_derivative_energy}, and using $\dot{H}^{\frac{1}{2}}\hookrightarrow L^3$ and \eqref{critical_norm_decay}, we derive
\[
\frac{d}{dt}\left(\|\nabla^k u\|_{L^2}^2+\|\nabla^k B\|_{L^2}^2\right)+\left(\|\nabla^{k+1}u\|_{L^2}^2+\|\nabla^{k+1}B\|_{L^2}^2\right)\leq 0\quad \text{for all $t>\tilde{T}_k$}
\]
for a sufficiently large $\tilde{T}_k>0$. Moreover, from the proof of Theorem \ref{Theorem 3} we have
\[
\begin{aligned}
&\sup_{0\le t<\infty}\sup_{|\xi|\le 1} |\xi|^{\max\{\sigma_1, \sigma_2\}-k}\lt( |\widehat{\nabla^k u}(t,\xi)| + |\widehat{\nabla^k B}(t,\xi)|\rt) \leq \sup_{0\le t<\infty}\sup_{|\xi|\le 1} |\xi|^{\max\{\sigma_1, \sigma_2\}}\lt( |\widehat u(t,\xi)| + |\widehat B(t,\xi)|\rt)\leq C_{0}.
\end{aligned}
\]
Let $N>\frac{3}{2}+k-\max{\{\sigma_1,\sigma_2\}}$ and $T_k:=\max{\{N-1,\tilde{T}_k\}}$. Then, Lemma \ref{main lemma 1} with $f=(\nabla^k u,\nabla^k B)$ completes the proof of Lemma \ref{lemma 2}.  
\end{proof}

\begin{remark}\upshape
In the proof of Lemma \ref{lemma 2}, we set $T_k=\max{\{N-1, \tilde{T}_k\}}$ for any $N>\frac{3}{2}+k-\max{\{\sigma_1,\sigma_2\}}$ and a sufficiently large $\tilde{T}_k>0$. For clarity, we fix 
\begin{equation}\label{N condition}
N>\frac{3}{2}+k-\min{\{\sigma_1,\sigma_2\}},\quad T_k:= \max{\{N-1, \tilde{T}_k\}}
\end{equation}
throughout the proof of Theorem \ref{Theorem 4}.
\end{remark}

\subsubsection*{\bf Proof of Theorem \ref{Theorem 4}}
Now we are ready to prove Theorem \ref{Theorem 4} by estimating $\nabla^k u$ and $\nabla^k B$ separately.

\vspace{1ex}

\noindent
$\blacktriangleright$ \underline{Decay rate of $\nabla^{k} u$}. When $\sigma_1\geq\sigma_2$, the desired decay rates of $\nabla^{k}u$ are straightforward from Lemma \ref{lemma 2}. So, we only consider the case $\sigma_1<\sigma_2\leq0$. From \eqref{u_derivative_energy}, we obtain
\[
\frac{1}{2}\frac{d}{dt}\left\|\nabla^{k}u\right\|^{2}_{L^{2}}+\left\|\nabla^{k+1}u\right\|^{2}_{L^{2}} \leq C \|u\|_{L^3}\left\|\nabla^{k+1}u\right\|^{2}_{L^{2}}+\frac{1}{4} \left\|\nabla^{k+1}u\right\|^{2}_{L^{2}} +C \|B\|^{2}_{L^3}\left\|\nabla^{k+1}B\right\|^{2}_{L^{2}}.
\]
Hence, after using \eqref{critical_norm_decay} we deduce from Theorem \ref{Theorem 3} and Lemma \ref{lemma 2} that for all $t> T_k$ 
\[
\frac{d}{dt}\left\|\nabla^{k}u\right\|^{2}_{L^{2}}+\left\|\nabla^{k+1}u\right\|^{2}_{L^{2}} \leq C \|B\|^{2}_{L^3}\left\|\nabla^{k+1}B\right\|^{2}_{L^{2}}\leq C \|B\|_{L^2}\|\nabla B\|_{L^2}\left\|\nabla^{k+1}B\right\|^{2}_{L^{2}}\leq C_{k} (1+t)^{-\frac{9}{2} +2\sigma_2 -k}.
\]
By modifying Lemma \ref{main lemma 1}, we can derive 
\[
\begin{aligned}
\frac{d}{dt}\left[(1+t)^N\norm{\nabla^k u(t)}{L^2}^2\right] &\leq N(1+t)^{N-1}\int_{\{|\xi|^2\leq\frac{N}{1+t}\}}|\xi|^{2k}|\widehat{u}(t,\xi)|^2+C_{k} (1+t)^{N-\frac{9}{2} +2\sigma_2 -k}\\
&\leq C_k(1+t)^{N-\frac{5}{2}+\sigma_1-k}+C_{k} (1+t)^{N-\frac{9}{2} +2\sigma_2 -k}\leq C_{k} (1+t)^{N-\frac{5}{2} +\sigma_1 -k}
\end{aligned}
\]
for $N$ fixed in \eqref{N condition}. By integrating this in time, we deduce that
\[
\norm{\nabla^k u(t)}{L^2}^2\leq C_k(1+t)^{-\frac{3}{2}+\sigma_1-k}\quad\text{for all $t>T_k$}.
\]

\noindent
$\blacktriangleright$ \underline{Decay rate of $\nabla^{k} B$}. When $\sigma_1\leq \sigma_2$, the desired decay rates of $\nabla^k B$ are obvious from Lemma \ref{lemma 2}, and so is $B\in L^{\infty}([0,\infty);\my^{\sigma_2})$ from \eqref{B bound in Y}. So, we only consider $\sigma_1>\sigma_2$. We first check that
\begin{equation}\label{B_decay}
B\in L^{\infty}([0,\infty);\my^{\sigma_2})\quad\text{and}\quad \|B(t)\|_{L^2}^2\leq C_0(1+t)^{-\frac{3}{2}+\sigma_2}.
\end{equation}
When $\sigma_1>\sigma_2$, Theorem \ref{Theorem 3} gives
\[
\norm{u(t)}{L^2}^2+\norm{B(t)}{L^2}^2\leq C_0(1+t)^{-\frac{3}{2}+\sigma_1}.
\]
By taking $L^2$ inner product of \eqref{Hall MHD b} with $B$, we obtain 
\begin{align*}
\frac{1}{2}\frac{d}{dt}\norm{B}{L^2}^2+\norm{\nabla B}{L^2}^2=\int (B\cdot\nabla u)\cdot B\leq C\norm{u}{L^3}\norm{\nabla B}{L^2}^2\leq \frac{1}{2}\|\nabla B\|_{L^2}^2\quad\text{for all $t>T_0$}
\end{align*}
for some $T_0>0$ from \eqref{critical_norm_decay}.
Since Theorem \ref{Theorem 3} says $$\sup_{0\leq t<\infty}\sup_{|\xi|\leq 1}|\widehat B(t,\xi)|\leq C_0,$$ Lemma \ref{main lemma 1} yields
\[
\norm{B(t)}{L^2}^2\leq C_0(1+t)^{-\frac{3}{2}}.
\]
If $\sigma_2\neq -1$, then $\sigma_1-\sigma_2<2$. So, we obtain
\begin{align*}
|\xi|^{\sigma_2} |\widehat B(t,\xi)| &\le \norm{B_0}{\my^{\sigma_2}} + \int_0^t |\xi|^{1+\sigma_2} e^{-(t-\tau)|\xi|^2} \|u(\tau)\|_{L^2}\|B(\tau)\|_{L^2}\,d\tau+ \int_0^t |\xi|^{2+\sigma_2} e^{-(t-\tau)|\xi|^2} \|B(\tau)\|^{2}_{L^2}\,d\tau\\
&\le \|B_0\|_{\my^{\sigma_2}} + C_{0}\int_0^t (t-\tau)^{-\frac{1+\sigma_2}{2}} (1+\tau)^{-\frac 32+\frac{\sigma_1}{2}} \,d\tau + C_{0}\int_0^t (t-\tau)^{-\frac{2+\sigma_2}{2}} (1+\tau)^{-\frac 32}\,d\tau \leq C_{0},
\end{align*}
where we use 
\[
\int_0^t (t-\tau)^{-\frac{1+\sigma_2}{2}} (1+\tau)^{-\frac 32+\frac{\sigma_1}{2}} \,d\tau\leq \int_0^t (t-\tau)^{-\frac{1+\sigma_2}{2}} \tau^{-\frac 12+\frac{\sigma_2}{2}} \,d\tau=\mathcal{B}\left(\frac{1-\sigma_2}{2},\frac{1+\sigma_2}{2}\right).
\]
Hence, by Lemma \ref{main lemma 1}
\[
\norm{B(t)}{L^2}^2\leq C_0(1+t)^{-\frac{3}{2}+\sigma_2}.
\]
If $\sigma_2=-1$, when $|\xi|\leq 1$ we have
\begin{align*}
|\xi|^{-\frac{1}{2}}|\widehat B(t,\xi)| &\leq \norm{B_0}{\my^{-1}} + \int_0^t |\xi|^{\frac{1}{2}} e^{-(t-\tau)|\xi|^2} \|u(\tau)\|_{L^2}\|B(\tau)\|_{L^2}\,d\tau+ \int_0^t |\xi|^{\frac{3}{2}} e^{-(t-\tau)|\xi|^2} \|B(\tau)\|^{2}_{L^2}\,d\tau\\
&\le \|B_0\|_{\my^{-1}} + C_{0}\int_0^t (t-\tau)^{-\frac{1}{4}} (1+\tau)^{-\frac 32+\frac{\sigma_1}{2}}\,d\tau + C_{0}\int_0^t (t-\tau)^{-\frac{3}{4}} (1+\tau)^{-\frac 32}\,d\tau \leq C_{0}
\end{align*}
which implies 
\[
\norm{B(t)}{L^2}^2\leq C_0(1+t)^{-2}.
\]
Thus, we bound $$\sup_{0\leq t<\infty}\norm{B(t)}{\my^{-1}}\leq C_0$$ and obtain the desired decay rate of $B$ by Lemma \ref{main lemma 1}.
 
 \vspace{1ex}

Now, we compute the decay rates of $\nabla^k B$ for $k\in\mathbb{N}$. From \eqref{B_derivative_energy}, we have
\[
\begin{split}
& \frac{1}{2}\frac{d}{dt}\left\|\nabla^{k}B\right\|^{2}_{L^{2}}+\left\|\nabla^{k+1}B\right\|^{2}_{L^{2}} \\
&\leq C \left\|u\right\|_{L^3}\left\|\nabla^{k+1}B\right\|^{2}_{L^{2}}+C \left\|B\right\|^{2}_{L^3}\left\|\nabla^{k+1}u\right\|^{2}_{L^{2}}+\frac{1}{4} \left\|\nabla^{k+1}B\right\|^{2}_{L^{2}}+C\left\|\nabla B\right\|_{L^3}\left\|\nabla^{k+1}B\right\|^{2}_{L^{2}}.
\end{split}
\]
By applying \eqref{critical_norm_decay}, we deduce from \eqref{B_decay} and Lemma \ref{lemma 2} that for all $t > T_k$
\[
\frac{d}{dt}\left\|\nabla^{k}B\right\|^{2}_{L^{2}}+\left\|\nabla^{k+1}B\right\|^{2}_{L^{2}}\leq C\norm{B}{L^2}\norm{\nabla B}{L^2}\norm{\nabla^{k+1}u}{L^2}^{2} \leq C_{k} (1+t)^{-\frac{9}{2} +\frac{3\sigma_1+\sigma_2}{2}-k}.
\]
As we bound $\nabla^k u$ above, for $N$ in \eqref{N condition} and for all $t>T_k$, we attain
\[
\frac{d}{dt}\left[(1+t)^N\|\nabla^{k}B(t)\|_{L^{2}}^2  \right] \leq C_k(1+t)^{N-\frac{5}{2} +\sigma_{2}-k}+C_{k} (1+t)^{N-\frac{9}{2} +\frac{3\sigma_1+\sigma_2}{2} -k}\leq C_k(1+t)^{N-\frac{5}{2} +\sigma_{2}-k}
\]
since $3\sigma_1-\sigma_2\leq 4$. Hence, we derive 
\[
\norm{\nabla^{k}B(t)}{L^2}^2\leq C_k(1+t)^{-\frac{3}{2}+\sigma_2-k}\quad\text{for all $t>T_k$}.
\]
This completes the proof of Theorem \ref{Theorem 4}.

\subsection{Proof of Theorem \ref{Theorem 5}}
From \cite{Liu}, we deduce that if $\|u_0\|_{\mx^{-1}} + \|B_0\|_{\mx^{-1}}+ \|B_0\|_{\mx^0}\leq \epsilon$ is sufficiently small, $u \in C([0,\infty); \mx^{-1}) \cap L^1((0,\infty); \mx^1)$, $B \in C([0,\infty); \mx^{-1}\cap \mx^0) \cap L^1((0,\infty); \mx^1\cap \mx^2)$, and 
\eqn \label{Hall MHD bound in X}
\left\|u(t)\right\|_{\mx^{-1}}+\left\|B(t)\right\|_{\mx^{-1}}+\left\|B(t)\right\|_{\mx^{0}} +(1-C\epsilon)\int^{t}_{0}\left(\left\|u(\tau)\right\|_{\mx^{1}}+\left\|B(\tau)\right\|_{\mx^{1}}+\left\|B(\tau)\right\|_{\mx^{2}}\right)d\tau \leq C\epsilon
\een
for all $t>0$. So, we only need to show $(u,B) \in L^{\infty}([0,\infty); \my^{\sigma})$ and derive the decay rate in $\mx^{-1}$.

\subsubsection{\bf Bounds in $\my^\sigma$} \label{Bound Y sigma dd}
We divide the estimates for $(u, B)$ into three cases, following the approach in the proof of Theorem \ref{Theorem 3}. To this end, we repeatedly employ \eqref{Hall MHD bound in X} to bound $(u, B)$ in $\mathcal{Y}^\sigma$. However, since $u$ can be bounded analogously to the proof of Theorem \ref{Theorem 2}, we focus primarily on the details of estimating $B$.

\vspace{1ex}
\noindent $\blacktriangleright$ ({\bf Case 1}: $\sigma=1$) We first have
\[
|\xi||\widehat u(t,\xi)| \leq \|u_0\|_{\my^1}+ C\epsilon \sup_{0 \le \tau \le t}\lt( \|u(\tau)\|_{\my^1} + \|B(\tau)\|_{\my^1}\rt).
\]
We now estimate $B$. In this case, we use
\[
B(t)=e^{t\Delta}B_{0}+\int^{t}_{0}e^{(t-\tau)\Delta}\left(\nabla \times (u\times B) -\nabla \times ((\nabla \times B)\times B)\right)(\tau)\,d\tau
\]
instead of (\ref{HMHD Integral b}). Then, using (\ref{est_conv 1}) with $\alpha=1$, we have 
\[
\begin{aligned}
|\xi||\widehat B(t,\xi)| &\le |\xi|e^{-t|\xi|^2}|\widehat B_0(\xi)| + \int_0^t  |\xi|^2 e^{-(t-\tau)|\xi|^2}\intr |\widehat  u(\tau,\xi-\eta)| |\widehat B(\tau,\eta)| \,d\eta d\tau\\
&\quad + \int_0^t |\xi|^2 e^{-(t-\tau)|\xi|^2}\intr  |\widehat{\nabla \times B}(\tau,\xi-\eta)| |\widehat B(\tau,\eta)|\,d\eta d\tau\\
&\le \|B_0\|_{\my^1} + \sup_{0\le \tau\le t} \left(\|u(\tau)\|_{\my^1}\|B(\tau)\|_{\mx^{-1}} + \|B(\tau)\|_{\my^1}\|u(\tau)\|_{\mx^{-1}}\right) \int_0^t |\xi|^2 e^{-(t-\tau)|\xi|^2}\,d\tau\\
&\quad + \sup_{0\le \tau \le t} \left(\|B(\tau)\|_{\my^1}\|B(\tau)\|_{\mx^0}\right) \int_0^t |\xi|^2 e^{-(t-\tau)|\xi|^2}\,d\tau\\
& \leq \|B_0\|_{\my^1}+ C\epsilon \sup_{0 \le \tau \le t}\lt( \|u(\tau)\|_{\my^1} + \|B(\tau)\|_{\my^1}\rt).
\end{aligned}
\]
From these two bounds, we have
\[
\sup_{0 \le \tau \le t}\lt( \|u(\tau)\|_{\my^1} + \|B(\tau)\|_{\my^1}\rt)\leq \|u_0\|_{\my^1} + \|B_0\|_{\my^1}+C\epsilon \sup_{0 \le \tau \le t}\lt( \|u(\tau)\|_{\my^1} + \|B(\tau)\|_{\my^1}\rt).
\]

\noindent $\blacktriangleright$ ({\bf Case 2}: $\sigma \in (-1,1)$) In this case, we bound $u$ as 
\[
\|u(t)\|_{\my^\sigma}\leq \|u_0\|_{\my^\sigma}+ C\epsilon \sup_{0 \le \tau \le t}\lt( \|u(\tau)\|_{\my^\sigma} + \|B(\tau)\|_{\my^\sigma}\rt).
\]
For $B$, we have
\begin{align*}
|\xi|^\sigma |\widehat B(t,\xi)| &\le |\xi|^\sigma e^{-t|\xi|^2}|\widehat B_0(\xi)| + \int_0^t  |\xi|^{1+\sigma} e^{-(t-\tau)|\xi|^2}\int |\widehat  u(\tau,\xi-\eta)| |\widehat B(\tau,\eta)| \,d\eta d\tau\\
&\quad + \int_0^t |\xi|^{2+\sigma} e^{-(t-\tau)|\xi|^2}\int  |\widehat B(\tau,\xi-\eta)| |\widehat B(\tau,\eta)|\,d\eta d\tau\le \|B_0\|_{\my^\sigma}+\text{(I)} +\text{(II)}.
\end{align*}
Using \eqref{est_conv 1} with $\alpha=\sigma$ and \eqref{intp_x 1}, we first bound (I) as
\begin{align*}
\text{(I)} &\leq \int_0^t  |\xi|^{1+\sigma} e^{-(t-\tau)|\xi|^2} \left(\|u(\tau)\|_{\my^\sigma}\|B(\tau)\|_{\mx^{-\sigma}}+\|B(\tau)\|_{\my^{\sigma}}\|u(\tau)\|_{\mx^{-\sigma}}\right)\,d\tau \\
&\leq \sup_{0\le \tau \le t}\|u(\tau)\|_{\my^\sigma}  \Big(\int_0^t  |\xi|^2 e^{-(t-\tau)\frac{2}{1+\sigma}|\xi|^2} \,d\tau\Big)^{\frac{1+\sigma}{2}} \Big(\int_0^t \|B(\tau)\|_{\mx^{-\sigma}}^{\frac{2}{1-\sigma}}\,d\tau \Big)^{\frac{1-\sigma}{2}}\\
&\quad +  \sup_{0\le \tau \le t}\|B(\tau)\|_{\my^{\sigma}}  \Big(\int_0^t  |\xi|^2 e^{-(t-\tau)\frac{2}{1+\sigma}|\xi|^2} \,d\tau\Big)^{\frac{1+\sigma}{2}}\Big(\int_0^t \|u(\tau)\|_{\mx^{-\sigma}}^{\frac{2}{1-\sigma}}\,d\tau \Big)^{\frac{1-\sigma}{2}}\\
&\leq C\sup_{0\le \tau \le t}\|u(\tau)\|_{\my^\sigma} \Big(\sup_{0\le \tau \le t}\|B(\tau)\|_{\mx^{-1}} \Big)^{\frac{1+\sigma}{2}} \Big(\int_0^t \|B(\tau)\|_{\mx^1}\,d\tau \Big)^{\frac{1-\sigma}{2}}\\
& \quad+ C \sup_{0\le \tau \le t}\|B(\tau)\|_{\my^\sigma} \Big(\sup_{0\le \tau \le t}\|u(\tau)\|_{\mx^{-1}} \Big)^{\frac{1+\sigma}{2}} \Big(\int_0^t \|u(\tau)\|_{\mx^1}\,d\tau \Big)^{\frac{1-\sigma}{2}}\\
&\leq C\epsilon \sup_{0 \le \tau \le t}\lt( \|u(\tau)\|_{\my^\sigma} + \|B(\tau)\|_{\my^\sigma}\rt).
\end{align*}

When $\sigma\in[0,1)$, using
\eqn \label{sigma inequality}
|\xi|^\sigma \le (|\xi -\eta| + |\eta|)^\sigma \le  C(|\xi-\eta|^\sigma + |\eta|^\sigma),
\een
we bound
\begin{align*}
\text{(II)} &\le C\int_0^t |\xi|^2 e^{-(t-\tau)|\xi|^2}  \int |\xi-\eta|^\sigma  |\widehat B(\tau,\xi-\eta)| |\widehat B(\tau,\eta)|\,d\eta d\tau\\
&\quad + C\int_0^t |\xi|^2 e^{-(t-\tau)|\xi|^2}  \int |\widehat B(\tau,\xi-\eta)| |\eta|^\sigma |\widehat B(\tau,\eta)|\,d\eta d\tau\\
&\le C \sup_{0 \le \tau \le t}\|B(\tau)\|_{\my^\sigma}  \sup_{0 \le \tau \le t}\|B(\tau)\|_{\mx^0}\int_0^t |\xi|^2 e^{-(t-\tau)|\xi|^2} \,d\tau \leq C\epsilon \sup_{0 \le \tau \le t}\|B(\tau)\|_{\my^\sigma}.
\end{align*}

When $\sigma \in (-1,0)$, we use (\ref{intp_x 2})  to obtain 
\begin{align*}
\text{(II)} &\le \int_0^t |\xi|^{2+\sigma} e^{-(t-\tau)|\xi|^2}\| B(\tau)\|_{\my^\sigma} \|B(\tau)\|_{\mx^{-\sigma}}\,d\tau\\
&\le \sup_{0\le \tau \le t}\|B(\tau)\|_{\my^\sigma} \Big(\int_0^t |\xi|^2 e^{-(t-\tau)\frac{2}{2+\sigma}|\xi|^2 }\,d\tau\Big)^{\frac{2+\sigma}{2}} \Big(\int_0^t \|B(\tau)\|_{\mx^{-\sigma}}^{\frac{2}{-\sigma}}\,d\tau  \Big)^{\frac{-\sigma}{2}}\\
&\le C\sup_{0\le \tau \le t}\|B(\tau)\|_{\my^\sigma} \Big(\sup_{0\le \tau \le t}\|B(\tau)\|_{\mx^0} \Big)^{\frac{2+\sigma}{2}} \Big(\int_0^t \|B(\tau)\|_{\mx^2}\,d\tau  \Big)^{\frac{-\sigma}{2}} \leq C\epsilon \sup_{0 \le \tau \le t}\|B(\tau)\|_{\my^\sigma}.
 \end{align*}

Combining the above estimates, we arrive at
\[
\sup_{0 \le \tau \le t}\lt( \|u(\tau)\|_{\my^\sigma} + \|B(\tau)\|_{\my^\sigma}\rt)\leq \|u_0\|_{\my^\sigma} + \|B_0\|_{\my^\sigma} +C\epsilon \sup_{0 \le \tau \le t}\lt( \|u(\tau)\|_{\my^\sigma} + \|B(\tau)\|_{\my^\sigma}\rt).
\]

\noindent $\blacktriangleright$ ({\bf Case 3}: $\sigma=-1$) In this case, we simply use \eqref{est_conv 1} with $\alpha=1$ to get
\[
|\xi|^{-1} |\widehat u(t,\xi)| \le \|u_0\|_{\my^{-1}} + \sup_{0\le \tau \le t}\|u(\tau)\|_{\my^{-1}} \int_0^t \|u(\tau)\|_{\mx^1} \,d\tau + \sup_{0\le \tau \le t}\|B(\tau)\|_{\my^{-1}}\int_0^t \|B(\tau)\|_{\mx^1} \,d\tau,
\]
and
\begin{align*}
|\xi|^{-1} |\widehat B(t,\xi)| &\le \|B_0\|_{\my^{-1}} +  \sup_{0\le \tau \le t}\|u(\tau)\|_{\my^{-1}}\int_0^t \|B(\tau)\|_{\mx^1}\,d\tau +  \sup_{0\le \tau \le t}\|B(\tau)\|_{\my^{-1}} \int_0^t \|u(\tau)\|_{\mx^1} \,d\tau\\
&\quad + \int_0^t |\xi| e^{-(t-\tau)|\xi|^2}\| B(\tau)\|_{\my^{-1}} \|B(\tau)\|_{\mx^1} d\tau\\
&\le \|B_0\|_{\my^{-1}} +  \sup_{0\le \tau \le t}\|u(\tau)\|_{\my^{-1}}\int_0^t \|B(\tau)\|_{\mx^1}\,d\tau +   \sup_{0\le \tau \le t}\|B(\tau)\|_{\my^{-1}}  \int_0^t \|u(\tau)\|_{\mx^1} \,d\tau\\
&\quad +\sup_{0\le \tau \le t}\|B(\tau)\|_{\my^{-1}}  \Big(\int_0^t |\xi|^2 e^{-2(t-\tau)|\xi|^2}\,d\tau\Big)^{\frac12} \Big(\int_0^t \|B(\tau)\|_{\mx^1}^2 d\tau\Big)^{\frac12}\\
&\le  \|B_0\|_{\my^{-1}} +  \sup_{0\le \tau \le t}\|u(\tau)\|_{\my^{-1}}\int_0^t \|B(\tau)\|_{\mx^1}\,d\tau +  \sup_{0\le \tau \le t}\|B(\tau)\|_{\my^{-1}}\int_0^t \|u(\tau)\|_{\mx^1} \,d\tau\\
&\quad +C\sup_{0\le \tau \le t}\|B(\tau)\|_{\my^{-1}} \lt(\sup_{0\le \tau \le t}\|B(\tau)\|_{\mx^0}\rt)^{\frac{1}{2}}  \lt(\int_0^t \|B(\tau)\|_{\mx^2} \,d\tau\rt)^{\frac12}.
\end{align*}
Therefore, we have
\[
\sup_{0 \le \tau \le t}\lt( \|u(\tau)\|_{\my^{-1}} + \|B(\tau)\|_{\my^{-1}}\rt)\leq \|u_0\|_{\my^{-1}} + \|B_0\|_{\my^{-1}} +C\epsilon \sup_{0 \le \tau \le t}\lt( \|u(\tau)\|_{\my^{-1}} + \|B(\tau)\|_{\my^{-1}}\rt).
\]

\noindent $\blacktriangleright$ By collecting the bounds together, we arrive at 
\[
\sup_{0 \le t <\infty}\lt( \|u(t)\|_{\my^{\sigma}} + \|B(t)\|_{\my^{\sigma}}\rt)\leq \|u_0\|_{\my^{\sigma}} + \|B_0\|_{\my^{\sigma}} +C\epsilon \sup_{0 \le t <\infty}\lt( \|u(t)\|_{\my^{\sigma}} + \|B(t)\|_{\my^{\sigma}}\rt)
\]
for all $\sigma\in [-1,1]$. By restricting the size of $\epsilon$ as $2C\epsilon< 1$, we finally obtain 
\[
\sup_{0 \le t<\infty}\lt( \|u(t)\|_{\my^{\sigma}} + \|B(t)\|_{\my^{\sigma}}\rt)\leq 2 \left(\|u_0\|_{\my^{\sigma}} + \|B_0\|_{\my^{\sigma}} \right).
\]

\subsubsection{\bf Decay rates}
We now investigate the temporal decay rates of $(u,B)$ in $\mathcal{X}^{-1}$ by applying Lemma \ref{main lemma 2}. Since the uniform bound for $(u,B)$ in $\mathcal{Y}^\sigma$ implies \eqref{X inequality 2}, it suffices to show that $(u,B)$ satisfies \eqref{X inequality} with $k=0$. Indeed, by employing \eqref{est_conv 1} with $\alpha=1$, we obtain
\[
\begin{aligned}
\frac{d}{dt}\|u(t)\|_{\mx^{-1}} + \|u(t)\|_{\mx^1} &\le \int\int |\widehat u (t, \xi-\eta)| |\widehat u(t,\eta)| \,d\eta d\xi + \int\int |\widehat B (t, \xi-\eta)| |\widehat B(t,\eta)| \,d\eta d\xi\\
&\le \|u(t)\|_{\mx^{-1}}\|u(t)\|_{\mx^1} + \|B(t)\|_{\mx^{-1}}\|B(t)\|_{\mx^1} \leq C\epsilon \left(\|u(t)\|_{\mx^1} +\|B(t)\|_{\mx^1}\right)
\end{aligned}
\]
and
\[
\begin{aligned}
\frac{d}{dt}\|B(t)\|_{\mx^{-1}} +  \|B(t)\|_{\mx^1} &\le 2\int\int |\widehat u (t, \xi-\eta)| |\widehat B(t,\eta)| \,d\eta d\xi+ \int\int |\widehat{\nabla  \times B} (t, \xi-\eta)| |\hat B(t,\eta)| \,d\eta d\xi\\
&\le \|u(t)\|_{\mx^{-1}}\|B(t)\|_{\mx^1} + \|B(t)\|_{\mx^{-1}}\|u(t)\|_{\mx^1} +\|B(t)\|_{\mx^0}\|B(t)\|_{\mx^1}\\
&\leq C\epsilon \left(\|u(t)\|_{\mx^1} +\|B(t)\|_{\mx^1}\right).
\end{aligned}
\]
By combining these two estimates with $\epsilon$ restricted to satisfy $\theta=1-C\epsilon>0$, we have
\[
\frac{d}{dt}\lt( \|u(t)\|_{\mx^{-1}} + \|B(t)\|_{\mx^{-1}} \rt) + \theta \lt( \|u(t)\|_{\mx^1} + \|B(t)\|_{\mx^1} \rt) \le 0.
\]
Therefore, the desired decay rate in $\mathcal{X}^{-1}$ is established by Lemma \ref{main lemma 2}.

\subsubsection{\bf Decay rates in $\mx^{k-1}$}
Since we already have the uniform bound of $(u, B)$ in $\my^\sigma$, we only need to show that $(u,B)$ satisfies (\ref{X inequality}). Using (\ref{sigma inequality}) and (\ref{intp_x 3}),
\[
\begin{aligned}
\frac{d}{dt}\|u(t)\|_{\mx^{k-1}} + \|u(t)\|_{\mx^{k+1}} &\le \int\int |\xi|^{k}|\widehat u (t, \xi-\eta)| |\widehat u(t,\eta)| \,d\eta d\xi + \int\int |\xi|^{k}|\widehat B (t, \xi-\eta)| |\widehat B(t,\eta)| \,d\eta d\xi\\
&\le C\|u(t)\|_{\mx^{0}}\|u(t)\|_{\mx^k} + C\|B(t)\|_{\mx^{0}}\|B(t)\|_{\mx^k} \\
&\le C\|u(t)\|_{\mx^{-1}}\|u(t)\|_{\mx^{k+1}} + C\|B(t)\|_{\mx^{-1}}\|B(t)\|_{\mx^{k+1}} \\
&\leq C\epsilon \left(\|u(t)\|_{\mx^{k+1}} +\|B(t)\|_{\mx^{k+1}}\right)
\end{aligned}
\]
and
\[
\begin{aligned}
\frac{d}{dt}\|B(t)\|_{\mx^{k-1}} +  \|B(t)\|_{\mx^{k+1}} &\le 2\int\int |\xi|^{k}|\widehat u (t, \xi-\eta)| |\widehat B(t,\eta)| \,d\eta d\xi+ \int\int |\xi|^{k+1}|\widehat{B} (t, \xi-\eta)| |\hat B(t,\eta)| \,d\eta d\xi\\
&\le C\|u(t)\|_{\mx^{0}}\|B(t)\|_{\mx^k} + C\|B(t)\|_{\mx^{0}}\|u(t)\|_{\mx^k}+C\|B(t)\|_{\mx^{0}}\|B(t)\|_{\mx^{k+1}} \\
&\leq C\|u(t)\|_{\mx^{-1}}\|u(t)\|_{\mx^{k+1}} + C\|B(t)\|_{\mx^{-1}}\|B(t)\|_{\mx^{k+1}}+C\|B(t)\|_{\mx^{0}}\|B(t)\|_{\mx^{k+1}}\\
&\leq C\epsilon \left(\|u(t)\|_{\mx^{k+1}} +\|B(t)\|_{\mx^{k+1}}\right).
\end{aligned}
\]
By combining these two estimates with $\epsilon$ restricted to satisfy $\theta=1-C\epsilon>0$, we have
\[
\frac{d}{dt}\lt( \|u(t)\|_{\mx^{k-1}} + \|B(t)\|_{\mx^{k-1}} \rt) + \theta \lt( \|u(t)\|_{\mx^{k+1}} + \|B(t)\|_{\mx^{k+1}} \rt) \le 0
\]
and again, the decay rate follows from Lemma \ref{main lemma 2}. This completes the proof of Theorem \ref{Theorem 5}.

\appendix

\section{$L^{1}$ bound of $u$} \label{A 1}
The decay rate in \cite{Schonbek}  implies that  $L^{1}$ is an invariant space for all $t>0$. To show this, we first note that 
\[
\frac{d}{dt} \left\|u\right\|_{L^{1}}\leq \left\|\mathbb{P}(u\cdot \nabla u)\right\|_{L^{1}},
\]
where $\mathbb{P}$ is  the  Leray projection operator defined in (\ref{Leray projection}). Since $u\cdot \nabla u\in \mathcal{H}$ (the Hardy space) with $\norm{u\cdot\nabla u}{\mathcal{H}}\leq C \norm{u}{L^2}\norm{\nabla u}{L^2}$ \cite{Coifman} and $\mathbb{P}: \mathcal{H} \rightarrow L^{1}$ \cite{Stein}, we obtain 
\[
\frac{d}{dt} \left\|u\right\|_{L^{1}}\leq C\left\|u\cdot \nabla u\right\|_{\mathcal{H}} \leq C \left\|u\right\|_{L^{2}}\left\|\nabla u\right\|_{L^{2}}.
\]
Integrating this in time, 
\[
\begin{split}
\left\|u(t)\right\|_{L^{1}}&\leq \left\|u_{0}\right\|_{L^{1}}+C\int^{t}_{0}(1+\tau)^{-\frac{3}{4}} \left\|\nabla u(\tau)\right\|_{L^{2}}d\tau\\
&\leq \left\|u_{0}\right\|_{L^{1}}+C\left[\int^{t}_{0}\left\|\nabla u(\tau)\right\|^{2}_{L^{2}}d\tau\right]^{\frac{1}{2}} \left[\int^{t}_{0}(1+\tau)^{-\frac{3}{2}} d\tau\right]^{\frac{1}{2}}\leq \left\|u_{0}\right\|_{L^{1}}+C \left\|u_{0}\right\|_{L^{2}} \quad \text{for all $t>0$. }
\end{split}
\]

\section{Alternative proof of Theorem \ref{Theorem 1}} \label{A 2}

To prove Theorem \ref{Theorem 1}, we take two steps. We first estimate $\left\|e^{t\Delta}u_{0}\right\|_{L^{2}}$ using $u_{0}\in L^{2}\cap \my^{\sigma}$ from which we arrive at the decay rate of $u$ in Theorem \ref{Theorem 1} by \cite{Wiegner}. We then bound $u\in \my^{\sigma}$ using the decay rate of $u$.

\vspace{1ex}

\noindent
$\blacktriangleright$ We recall the argument in \cite{Wiegner}: if $\left\|e^{t\Delta}u_{0}\right\|^{2}_{L^{2}} \leq C(1+t)^{-\alpha_{0}}$, then  a weak solution of (\ref{NSE}) decays in time:
\[
\|u(t)\|^{2}_{L^{2}}\leq C_{0}(1+t)^{-\min \left\{\alpha_{0}, \frac{5}{2}\right\}}.
\]

We now bound $\left\|e^{t\Delta}u_{0}\right\|_{L^{2}}$ with $u_{0}\in L^{2}\cap \my^{\sigma}$: 
\[
\norm{e^{t\Delta}u_{0}}{L^{2}}^{2} =\int e^{-2t|\xi|^{2}}|\widehat{u}_{0}(\xi)|^{2}\,d\xi=\int_{|\xi|\leq 1} e^{-2t|\xi|^{2}}|\widehat{u}_{0}(\xi)|^{2}\,d\xi+\int_{|\xi|\ge 1} e^{-2t|\xi|^{2}}|\widehat{u}_{0}(\xi)|^{2}\,d\xi=\text{(I)+(II)}.
\]
We first deal with $\text{(I)}$: 
\[
\begin{split}
\text{(I)}&=(2t)^{-\frac{3}{2}+\sigma}\int_{|\xi|\leq 1}(2t|\xi|^{2})^{-\sigma}e^{-2t|\xi|^{2}}\left(|\xi|^{\sigma}|\widehat{u}_{0}(\xi)|\right)^{2}(2t)^{\frac{3}{2}}\,d\xi \\
&\leq Ct^{-\frac{3}{2}+\sigma}\norm{u_{0}}{\mathcal{Y}^{\sigma}}^{2} \int_{|\eta|\leq\sqrt{2t}}|\eta|^{-2\sigma} e^{-|\eta|^{2}}d\eta \leq Ct^{-\frac{3}{2}+\sigma}\norm{u_{0}}{\mathcal{Y}^{\sigma}}^{2},
\end{split}
\]
where we use the change of variables $\eta =\sqrt{2t} \xi$ and 
\[
\int |\eta|^{-2\sigma}e^{-|\eta|^{2}}\,d\eta=C\int^{\infty}_{0}r^{2-2\sigma} e^{-r^{2}}dr\leq C\int^{1}_{0}r^{2-2\sigma}dr +C\int^{\infty}_{1}e^{-r^{2}}dr\leq C
\]
when $\sigma<\frac{3}{2}$. Using \eqref{heat_inf_est} and the condition $\sigma<\frac{3}{2}$, we now bound $\text{(II)}$: 
\[
\text{(II)} =(2t)^{-\frac{3}{2}+\sigma}\int_{|\xi|\geq 1}(2t|\xi|^{2})^{\frac{3}{2}-\sigma}e^{-2t|\xi|^{2}}|\xi|^{2\sigma-3}|\widehat{u}_{0}(\xi)|^{2}\,d\xi  \leq Ct^{-\frac{3}{2}+\sigma} \int |\widehat{u}_{0}(\xi)|^{2}\,d\xi \leq  Ct^{-\frac{3}{2}+\sigma} \norm{u_{0}}{L^{2}}^{2}.   
\]
Therefore, these two bounds lead to  
\[
\norm{e^{t\Delta}u_{0}}{L^{2}}^{2} \leq Ct^{-\frac{3}{2}+\sigma} \left(\norm{u_{0}}{\mathcal{Y}^{\sigma}}^{2}+\norm{u_{0}}{L^{2}}^{2} \right).
\]
Since $\alpha_{0}=\frac{3}{2}-\sigma \leq \frac{5}{2}$ when $\sigma\ge -1$, we arrive at the desired decay rate of $u$ in Theorem \ref{Theorem 1}.

\vspace{1ex}

\noindent
$\blacktriangleright$ We here show that $u\in \my^{\sigma}$ using  (\ref{Theorem 1 Decay}). When $\sigma=1$, we  first have
\[
\begin{aligned}
|\xi| |\widehat u(t,\xi)| &\leq |\xi|e^{-t |\xi|^2} |\widehat u_0(\xi)| + \int_0^t |\xi|^2 e^{-(t-\tau)|\xi|^2} \|u(\tau)\|_{L^2}^2\,d\tau \\
&\leq \|u_0\|_{\my^1} + \|u_0\|_{L^2}^2 \int_0^t |\xi|^2 e^{-(t-\tau)|\xi|^2}\,d\tau \leq C_{0}
\end{aligned}
\]
from which we deduce that 
\[
\sup_{0\le t <\infty} \|u(t)\|_{\my^1} \leq C_{0}.
\]
When $\sigma \in [-1,1)$, we bound $u$ as (\ref{time bound 1}):
\[
\begin{aligned}
|\xi|^{\sigma} |\widehat u(t,\xi)| &\le  |\xi|^{\sigma} e^{-t |\xi|^2} |\widehat u_0(\xi)| + \int_0^t |\xi|^{1+\sigma} e^{-(t-\tau)|\xi|^2} \|u(\tau)\|_{L^2}^2\,d\tau \\
&\le \|u_0\|_{\my^{\sigma}} + C_{0}\int_0^t (t-\tau)^{-\frac{1+\sigma}{2}}(1+\tau)^{-\frac 32+\sigma}\,d\tau\leq C_{0}.
\end{aligned}
\]
So, we have 
\[
\sup_{0\le t <\infty} \|u(t)\|_{\my^\sigma} \leq C_{0}.
\]

\section*{Acknowledgments}
H. B. was supported by the National Research Foundation of Korea (NRF) grant funded by the Korea government (MSIT) (RS-2024-00341870).

J. J. was supported by the National Research Foundation of Korea (NRF) grant funded by the Korea government (MSIT) (RS-2022-00165600).

J. S. was supported by the National Research Foundation of Korea (NRF) grant funded by the Ministry of Education (RS-2025-25437873), and the Ministry of Science and ICT (RS-2024-00406821).

\end{document}